\documentclass[12pt]{amsart}
\usepackage{amsmath}
\usepackage{amscd}
\usepackage{amssymb}
\usepackage{amsfonts}
\usepackage{amsthm}
\usepackage{bbm}
\usepackage{cancel}
\usepackage{color}
\usepackage{eucal}
\usepackage{enumerate,yfonts}
\usepackage{enumitem}
\usepackage{relsize}
\usepackage[french,english]{babel}
\usepackage[T1]{fontenc}
 \allowdisplaybreaks

\usepackage{pdfsync}
 \usepackage[all,cmtip]{xy}
\usepackage{graphicx}
\usepackage{graphics}
\usepackage{hyperref}
\usepackage{latexsym}
\usepackage{mathrsfs}
 \usepackage{placeins}
\usepackage{pstricks}
\usepackage{bookmark}

\usepackage{stmaryrd}
\usepackage{url}

\DeclareFontFamily{U}{mathx}{\hyphenchar\font45}
\DeclareFontShape{U}{mathx}{m}{n}{
      <5> <6> <7> <8> <9> <10>
     <10.95> <12> <14.4> <17.28> <20.74> <24.88>
    mathx10
      }{}
\DeclareSymbolFont{mathx}{U}{mathx}{m}{n}
\DeclareMathAccent{\widecheck}{\mathalpha}{mathx}{"71}

\newtheorem{thm}{Theorem}[section]
\newtheorem{corollary}[thm]{Corollary}
\newtheorem{lemma}[thm]{Lemma}
\newtheorem{lem}[thm]{Lemma}

\newtheorem{thm-dfn}[thm]{Theorem-Definition}
\newtheorem{ass}[thm]{Assumption}

 %%%% for unnumbered statements

\numberwithin{equation}{section}

\newcommand{\fl}{{\mathfrak l}}

\newcommand{\fu}{{\mathfrak u}}

\newcommand{\fp}{{\mathfrak p}}

\newcommand{\fa}{{\mathfrak a}}

\newcommand{\fF}{{\mathfrak{F}}}

\newcommand{\Lp}{{\mathfrak{p}}}

\newcommand{\Lg}{{\mathfrak g}}

\newcommand{\Ln}{{\mathfrak{n}}}

\newcommand{\Ll}{{\mathfrak{l}}}

\newcommand{\fs}{{\mathfrak{s}}}

%another shorthand for German fraktur

\newcommand{\rf}{{\mathrm f}}
\newcommand{\rn}{{\mathrm n}}

\newcommand{\bC}{{\mathbb C}}

\newcommand{\bZ}{{\mathbb Z}}

\newcommand{\bQ}{{\mathbb Q}}

\newcommand{\bN}{{\mathbb N}}

\newcommand{\calF}{{\mathcal F}}

\newcommand{\calH}{{\mathcal H}}

\newcommand{\cO}{{\mathcal O}}
\newcommand{\cA}{{\mathcal A}}
\newcommand{\cF}{{\mathcal F}}
\newcommand{\cN}{{\mathcal N}}
\newcommand{\cH}{{\mathcal H}}

\newcommand{\cT}{{\mathcal T}}

\newcommand{\cP}{{\mathcal P}}
\newcommand{\cE}{{\mathcal E}}

\newcommand{\cZ}{{\mathcal Z}}

\newcommand{\cM}{{\mathcal{M}}}

\newcommand{\cL}{{\mathcal{L}}}

\newcommand{\on}{\operatorname}

\newcommand{\Loc}{\on{LocSys}}

\newcommand{\nc}{\newcommand}

\nc{\al}{{\alpha}} \nc{\be}{{\beta}} \nc{\ga}{{\gamma}}
\nc{\ve}{{\varepsilon}} \nc{\Ga}{{\Gamma}} 
\nc{\La}{{\fa}}

\nc{\ad }{{\on{ad }}}

\nc{\aff}{{\on{aff}}} \nc{\Aff}{{\mathbf{Aff}}}

\nc{\der}{{\on{der}}}

\nc{\diag}{{\on{diag}}}

\nc{\Fl}{{\calF\ell}}

\nc{\Hg}{{\on{Higgs}}}

\nc{\Id}{{\on{Id}}}

\nc{\Ind}{{\on{Ind}}}

\nc{\Op}{{\on{Op}}}

\nc{\res}{{\on{res}}}

\nc{\tr}{{\on{tr}}}

\nc{\GSp}{{\on{GSp}}} \nc{\GU}{{\on{GU}}} \nc{\SL}{{\on{SL}}}
\nc{\SU}{{\on{SU}}} \nc{\SO}{{\on{SO}}}

\nc{\nh}{{\Loc_{J^p}(\tau')}}
\nc{\bnh}{{\Loc_{\breve J^p}(\tau')}}

\nc{\bU}{{\overline{U}}} 
\nc{\IC}{{\on{IC}}}

\renewcommand{\SS}{{\operatorname{SS}}}
\nc{\op}{{\operatorname{P}}}

\newcommand{\br}{\begin{rouge}}
\newcommand{\er}{\end{rouge}}

\newcommand{\bb}{\begin{bluet}}
\newcommand{\eb}{\end{bluet}}

\nc{\ot}{\otimes}

\nc{\oh}{{\operatorname{H}}}
\nc{\gr}{{\operatorname{gr}}}
\nc{\rk}{{\operatorname{rank}}}
\nc{\codim}{{\operatorname{codim}}}
\nc{\img}{{\operatorname{Im}}}
\nc{\Span}{{\operatorname{Span}}}
\nc{\Img}{\operatorname{Im}}
\nc{\Char}{\operatorname{Char}}

\newcommand{\rcu}{\mathrm{cusp}}

\newcommand{\beqn}{\begin{equation*}}
\newcommand{\eeqn}{\end{equation*}}

\newcommand{\beq}{\begin{equation}}
\newcommand{\eeq}{\end{equation}}

\newcommand{\bern}{\begin{eqnarray*}}
\newcommand{\eern}{\end{eqnarray*}}

\newcommand{\ber}{\begin{eqnarray}}
\newcommand{\eer}{\end{eqnarray}}

\newcommand{\inv}{{\mathbin{/\mkern-4mu/}}}

\setlength{\parskip}{1.2ex}
\setlength{\oddsidemargin}{0in}
\setlength{\evensidemargin}{0in}
\setlength{\textwidth}{6.5in}
\setlength{\topmargin}{-0.15in}
\setlength{\textheight}{8.6in}

\begin{document}
\title[Character sheaves for symmetric pairs ]{Character sheaves for symmetric pairs: special linear groups}

        \author{Kari Vilonen}
        \address{School of Mathematics and Statistics, University of Melbourne, VIC 3010, Australia, and Department of Mathematics and Statistics, University of Helsinki, Helsinki, 00014, Finland}
         \email{kari.vilonen@unimelb.edu.au}
         \thanks{Kari Vilonen was supported in part by  the ARC grants DP150103525 and DP180101445 and the Academy of Finland.}
         \author{Ting Xue}
         \address{ School of Mathematics and Statistics, University of Melbourne, VIC 3010, Australia, and Department of Mathematics and Statistics, University of Helsinki, Helsinki, 00014, Finland}
         \email{ting.xue@unimelb.edu.au}
\thanks{Ting Xue was supported in part by the ARC grants DP150103525.}

\makeatletter
\let\@wraptoccontribs\wraptoccontribs
\makeatother

\begin{abstract}
We give an explicit description of character sheaves for the symmetric pairs  associated to  inner involutions of the special linear groups. We make use of the general strategy given in~\cite{VX} and central character consideration. We also determine the cuspidal  character sheaves.
\end{abstract}
\maketitle

\section{Introduction}

 In our previous work~\cite{VX} we gave a complete classification of character sheaves for classical symmetric pairs. This work can be viewed as a generalization of Lusztig's generalized Springer correspondence~\cite{Lu}.  In our paper on the stable grading case~\cite[Section 3]{VXsg} we explain how the general reductive case can be reduced to the case of almost simple simply connected groups $G$. In this paper we consider the case when $G=SL_n$. This is, in some sense, the most complicated case as $SL_n$ has a large center. We are left with the case of spin groups, which is treated in~\cite{X}, and the case of exceptional groups which we will address in future publications.
 
We recall the set-up in~\cite{VX}. Let $G$ be a connected complex reductive algebraic group, $\theta:G\to G$ an involution and  $K=G^\theta$, the subgroup of fixed points of $\theta$. The pair $(G,K)$ is called a symmetric pair.  Let $\Lg=\on{Lie}G$ and let $\Lg=\Lg_0\oplus\Lg_1$ be the decomposition into $\theta$-eigenspaces so that $d\theta|_{\Lg_i}=(-1)^i$. Let $\cN$ denote the nilpotent cone of $\Lg$ and let $\cN_1=\cN\cap\Lg_1$ be the nilpotent cone in $\Lg_1$.  Let $\Char_K(\Lg_1)$ denote the set of irreducible $\bC^*$-conic $K$-equivariant perverse sheaves on $\Lg_1$ whose singular support is nilpotent; the sheaves in $\Char_K(\Lg_1)$ are called character sheaves. Let $\cA_K(\Lg_1)$ denote the set of irreducible $K$-equivariant perverse  sheaves on $\cN_1$,  that is, \beqn
\begin{gathered}
\cA_K(\Lg_1)=\{\on{IC}(\cO,\cE)\,|\, \cO\subset\cN_1\text{ is an $K$-orbit and $\cE$ is an irreducible $K$-equivariant}\\
\ \ \text{ local system on $\cO$ (up to isomorphism)}\}. 
\end{gathered}
\eeqn
The sheaves in $\cA_K(\Lg_1)$ are called nilpotent orbital complexes.  The Fourier transform 
\beqn
\fF:P_K(\Lg_1)\to P_K(\Lg_1)
\eeqn 
induces,  by definition, a bijection
\beqn
\fF\left(\cA_K(\Lg_1)\right) \ = \ \Char_K(\Lg_1)\,.
\eeqn
  We call a character sheaf (resp. nilpotent orbital complex) cuspidal if it does not arise as a direct summand (up to shift) from parabolic induction of character sheaves (resp. nilpotent orbital complex) in $\Char_{L^\theta}(\Ll_1)$ (resp. $\cA_{L^\theta}({\Ll_1})$) for any $\theta$-stable Levi subgroup $L$ contained in a proper $\theta$-stable parabolic subgroup. We denote by $\Char_K^\rcu({\Lg_1})$ (resp. $\cA^\rcu_K({\Lg_1})$) the set of cuspidal character sheaves (resp. nilpotent orbital complexes). Furthermore, we write  $\Char_K^\rf(\Lg_1)$ for the set of full support character sheaves and  $\Char_K^\rn(\Lg_1)$ for the set of nilpotent support character sheaves.

The symmetric pairs $(SL_n,SO_n)$ were treated in~\cite{CVX} and the pairs $(SL_{2n},Sp_{2n})$ have been studied previously in~\cite{G2,H,L}. This leaves the case of inner involutions, i.e., the symmetric pairs $(SL_n,S(GL_p\times GL_q))$, $p+q=n$, which we treat in this paper. In~\cite{VX} we treat the case of classical symmetric pairs and in~\cite[Section 3]{VX} we outline a general strategy for symmetric pairs which is based on the nearby cycle construction of~\cite{GVX}. In the setting of $(SL_n,S(GL_p\times GL_q))$ we need an enhanced nearby cycle construction which we present in Section~\ref{central} of this paper. Using this construction, following the general strategy in~\cite{VX} we determine the character sheaves in this case. 

 The action of $Z(G)^\theta$ breaks the set $\Char_K(\Lg_1)$ into a direct sum where the summands consist of  character sheaves $\Char_K(\Lg_1)_\kappa$ such that $Z(G)^\theta$ acts via the character $\kappa$ on them.
For inner involutions of $SL_n$, $Z(G)^\theta$ is $\bZ/n\bZ$. The nearby cycle construction of~\cite{CVX} produces full support character sheaves. Since for most central characters $\kappa$ there are no full support sheaves in $\Char_K(\Lg_1)_\kappa$, we utilize the enhanced nearby cycle construction of Section~\ref{central}. This construction produces character sheaves with maximal support allowed by the character $\kappa$.

Combining the results in this paper with those in~\cite{CVX,H}, the cupsidal character sheaves for all symmetric pairs $(G,K)$ where $G=SL_n$ are classified as follows. They exist only if  the pair $(G,K)$ is split or quasi-split, that is, $(G,K)=(SL_n,SO_n)$ or $(G,K)=(SL_n,S(GL_p\times GL_q))$, $p+q=n$, $|p-q|\leq 1$. 
\begin{thm}
Suppose that $G=SL_n$. The cuspidal character sheaves are
\begin{enumerate}[topsep=.5ex]
\item if $(G,K)=(SL_n,SO_n)$, $\on{Char}_K^\rcu(\Lg_1)=\on{Char}_K^\rf(\Lg_1)$,
\item if $(G,K)=(SL_n,S(GL_p\times GL_q))$, $|p-q|=1$, $$\on{Char}_K^\rcu(\Lg_1)=\left\{\on{IC}(\cO_{reg},\cE_{\phi})\mid\phi\in(\widehat{\bZ/n\bZ})_{n}\right\},$$ 
\item if $(G,K)=(SL_n,S(GL_{n/2}\times GL_{n/2}))$,  
\bern
\on{Char}_K^\rcu(\Lg_1)&=&\left\{\on{IC}(\widecheck\cO_{(\frac{n}{2})_+(\frac{n}{2})_-},\cT_{\rho,\psi_{n}})\mid\rho\in\cP_2(1),\,\psi_{n}\in(\widehat{\bZ/n\bZ})_{n}\right\}\\
&\bigcup&(\text{if $\frac{n}{2}$ is odd}\,)\left\{\on{IC}(\widecheck\cO_{(\frac{n}{2})_+(\frac{n}{2})_-},\cT_{\tau,\psi_{\frac{n}{2}}})\mid\tau\in\cP(1),\,\psi_{\frac{n}{2}}\in(\widehat{\bZ/n\bZ})_{\frac{n}{2}}\right \}
\\&=&\left\{\fF\on{IC}(\cO^\omega_{reg},\cE_{\phi_n})\mid\omega={\rm I,II},\,\phi_{n}\in(\widehat{\bZ/n\bZ})_{n}\right\}\\
&\bigcup&(\text{if $\frac{n}{2}$ is odd}\,)\left\{\fF\on{IC}(\cO_{(\frac{n}{2})_+(\frac{n}{2})_-},\cE_{\phi_{\frac{n}{2}}})\mid\phi_{\frac{n}{2}}\in(\widehat{\bZ/\frac{n}{2}\bZ})_{\frac{n}{2}}\right\}.
\eern
\end{enumerate}
\end{thm}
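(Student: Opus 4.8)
The plan is to read this off from the complete classification of character sheaves for $G=SL_n$, which splits over the three families of involutions: $(SL_n,SO_n)$, treated in~\cite{CVX}; $(SL_{2n},Sp_{2n})$, treated in~\cite{H} (together with~\cite{VX}); and the inner involutions $(SL_n,S(GL_p\times GL_q))$, $p+q=n$, treated in the main body of this paper. Up to $\theta$-conjugacy these exhaust the involutions of $SL_n$, so nothing is missed. The organizing tool is that $\fF$ intertwines parabolic induction of nilpotent orbital complexes with parabolic induction of character sheaves, up to shift; hence $\fF$ restricts to a bijection $\fF(\cA^\rcu_K(\Lg_1))=\Char^\rcu_K(\Lg_1)$, and one may argue on whichever side is more transparent in a given family.

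First I would dispose of the families with no cuspidal sheaf --- which simultaneously proves that cuspidal sheaves occur only in the split or quasi-split range. For $(SL_{2n},Sp_{2n})$ the classification of~\cite{H} already exhibits every character sheaf as a summand of a proper parabolic induction. For $(SL_n,S(GL_p\times GL_q))$ with $|p-q|\geq 2$ I would argue within this paper, from the $ab$-diagram description of the $K$-orbits on $\cN_1$: for each orbit $\cO$ one produces a proper $\theta$-stable parabolic $Q=LU$ and a pair on $\cN(\Ll_1)$ whose Lusztig--Spaltenstein induction is $(\cO,\cE)$, the only obstruction for the largest orbits being a ``regular'' block in the $ab$-diagram, which cannot occur once $|p-q|\geq 2$. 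Hence $\cA^\rcu_K(\Lg_1)=\varnothing$ in both cases, and by $\fF$ so is $\Char^\rcu_K(\Lg_1)$.

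For $(SL_n,SO_n)$ I would invoke~\cite{CVX}: there the nearby-cycle construction produces the full-support character sheaves and identifies them with $\Char^\rf_K(\Lg_1)$, and each non-full-support character sheaf is shown to be induced. It then remains only to observe that a full-support character sheaf is cuspidal, which holds because any sheaf induced from a proper $\theta$-stable parabolic has support missing the $K$-regular-semisimple locus of $\Lg_1$ (the semisimple part of a point in the support lies, up to $K$-conjugacy, in a proper $\theta$-stable Levi), hence has proper support. This gives $\Char^\rf_K(\Lg_1)=\Char^\rcu_K(\Lg_1)$, i.e.~(1).

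The substance is the inner-involution case, where for most central characters $\kappa$ there is no full-support sheaf at all. I would run the enhanced nearby-cycle construction of Section~\ref{central}, which for each character $\kappa$ of $Z(G)^\theta=\bZ/n\bZ$ produces the character sheaves in $\Char_K(\Lg_1)_\kappa$ of maximal $\kappa$-allowed support; together with the induction bookkeeping this yields the full list of character sheaves, hence --- via the Fourier transform on $\Lg_1$ computed in the body --- the full list of nilpotent orbital complexes with their supports. To extract the cuspidal pairs I would first identify the distinguished $K$-orbits in $\cN_1$, those not of the form $\Ind_L^G\cO_L$ for a proper $\theta$-stable Levi $L$: for $|p-q|=1$ only the regular orbit $\cO_{reg}$, and for $p=q$ the two regular orbits $\cO^{\mathrm I}_{reg},\cO^{\mathrm{II}}_{reg}$ together with, when $\tfrac n2$ is odd, the orbit $\cO_{(\frac{n}{2})_+(\frac{n}{2})_-}$; then, on each, I would determine the cuspidal local systems via the rule that $\IC(\cO,\cE_\phi)$ is a summand of parabolic induction iff $\phi$ is inflated from a proper quotient of $A(\cO)$. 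Since $A(\cO_{reg})=\bZ/n\bZ$ --- identified with $Z(G)^\theta$, so that ``$\phi$ faithful'' is the condition $\phi\in(\widehat{\bZ/n\bZ})_n$ --- this selects exactly the faithful characters on the regular orbit(s), plus the order-$\tfrac n2$ character $\psi_{\frac n2}$ carried by $\cO_{(\frac{n}{2})_+(\frac{n}{2})_-}$ precisely when $\tfrac n2$ is odd. Applying $\fF$ turns these cuspidal nilpotent orbital complexes into the cuspidal character sheaves; for $|p-q|=1$ the relevant IC sheaves on $\cO_{reg}$ are $\fF$-stable and one reads off~(2) directly. The hard part, which I expect to be the real obstacle, is the equal case $p=q=\tfrac n2$: one must compute $\fF$ on $\Lg_1$ explicitly enough to match $\fF\IC(\cO^\omega_{reg},\cE_{\phi_n})$ ($\omega=\mathrm I,\mathrm{II}$, the label $\rho\in\cP_2(1)$ recording the two regular orbits) with $\IC(\widecheck\cO_{(\frac{n}{2})_+(\frac{n}{2})_-},\cT_{\rho,\psi_n})$, and likewise $\fF\IC(\cO_{(\frac{n}{2})_+(\frac{n}{2})_-},\cE_{\phi_{\frac n2}})$ with $\IC(\widecheck\cO_{(\frac{n}{2})_+(\frac{n}{2})_-},\cT_{\tau,\psi_{\frac n2}})$, and to match the component-group local systems $\cT_{\rho,\psi_n}$, $\cT_{\tau,\psi_{\frac n2}}$ on the orbital side with the characters of the centralizer component groups on the Fourier side --- reconciling the two regular orbits, the two orders ($n$ and $\tfrac n2$) of central character, and the Fourier transform all at once.
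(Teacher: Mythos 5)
Your overall skeleton — split over the three families of involutions, dispose of $Sp_{2n}$ via Henderson, treat $SO_n$ via \cite{CVX}, do the real work in the inner case, and use $\fF(\cA^\rcu_K)=\Char^\rcu_K$ to move between the two sides — matches the paper's. But two of your supporting arguments do not hold up, and the second one is precisely where the paper's method (central-character bookkeeping, Corollary~\ref{coro-cuspidal}) does the actual work.

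First, your one-line justification that full support implies cuspidal for $(SL_n,SO_n)$ is false. You argue that a sheaf induced from a proper $\theta$-stable parabolic has support missing the regular semisimple locus, because the semisimple parts of points in the support lie in a proper $\theta$-stable Levi. But in the split pair the maximal $\theta$-split Cartan $\fa_1$ is a full Cartan of $\Lg$ and sits inside proper $\theta$-stable Levi subgroups; so $\Ll_1$ contains regular semisimple elements and $K\cdot\fp_1$ can be all of $\Lg_1$. The identity $\Char^\rcu_K(\Lg_1)=\Char^\rf_K(\Lg_1)$ is genuinely a theorem of \cite{CVX} (Corollary 4.8 there), and you should simply cite it rather than supply a shortcut that fails.

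Second, the cuspidality ``rule'' you invoke — $\IC(\cO,\cE_\phi)$ is a summand of a proper parabolic induction iff $\phi$ is inflated from a proper quotient of $A_K(\cO)$ — is not correct, and this shows in your own list: $\cO_{(\frac n2)_+(\frac n2)_-}$ always carries faithful characters of $A_K(\cO)=\bZ/\tfrac n2\bZ$, but the corresponding sheaves are cuspidal only when $\tfrac n2$ is odd; you insert the parity restriction by hand, yet it contradicts the rule. Relatedly, your notion of ``distinguished'' $K$-orbit as not being Lusztig--Spaltenstein induced cannot be right as a cuspidality detector, since whether an orbit is LS-induced is a geometric fact that cannot depend on a parity condition, whereas the answer does. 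The paper's mechanism is different and is what makes the parity come out cleanly: one computes $\Char_K(\Lg_1)_\kappa$ for each central character $\kappa$ (Theorem~\ref{char-sl}), exhibits the sheaves attached to $\widecheck\cO_{m^l_+m^l_-\sqcup\mu}$ with $l>1$ or $\mu\neq\emptyset$ as parabolically induced via~\eqref{claim1-type A}--\eqref{claim3-type A}, and then observes that $\Char_{L^\theta}(\Ll_1)_\kappa=\emptyset$ for every proper $\theta$-stable Levi $L$ exactly when $\kappa$ has order $n$, and when $\kappa$ has order $\tfrac n2$ with $\tfrac n2$ odd the only surviving Levi type $S(GL_{n/2}\times GL_{n/2})$ produces only nilpotent support sheaves. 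That central-character filtration, not an inflation criterion on the component group, is what forces the parity in (3); your proposal is missing this idea, and the ad hoc parity clause does not substitute for it.

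Your treatment of $|p-q|\ge 2$ via $ab$-diagrams and LS-induction may be salvageable, but as written it conflates LS-induction of orbits with induction of pairs $(\cO,\cE)$ (the latter is what cuspidality is about), and it again omits the local-system side of the analysis. The paper does not need a separate argument here: it is subsumed in Theorem~\ref{char-sl} and the induction claims, which directly exhibit every character sheaf as induced unless $l=1$, $\mu=\emptyset$, forcing $p=q$, or unless one is on the regular orbit, forcing $|p-q|\le 1$.
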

We refer the readers to the body of the paper for explanation of the notation. The fact that cuspidal character sheaves do not exist for the pair $(SL_{2n},Sp_{2n}) $ follows from~\cite{H}. Part (1) follows from~\cite[Corollary 4.8]{CVX} and the character sheaves in (1) are obtained via nearby cycle construction, see~\cite{CVX}. The character sheaves in (2) are nilpotent support character sheaves as that particular central character does not allow any character sheaves supported outside the nilpotent cone. The character sheaves in (3) are obtained via the enhanced nearby cycle construction of Section~\ref{central} as explained in Section~\ref{sec-sl}.

The paper is organized as follows. In Section~\ref{sec-preliminaries} we recall the preliminaries and set up the notation. In Section~\ref{central} we describe a generalisation of the nearby cycle construction of~\cite{GVX}.  
In Section~\ref{sec-biorbital} we describe the nilpotent support character sheaves.  In Section~\ref{sec-main theorem} we state the main theorem (Theorem~\ref{char-sl}), where the character sheaves are determined. In particular, we classify cuspidal character sheaves in Corollary~\ref{coro-cuspidal}. In Section~\ref{sec-sl} we prove Theorem~\ref{char-sl} and Corollary~\ref{coro-cuspidal} by combining the results in the previous sections, parabolic induction and by exhibiting a bijection between nilpotent orbital complexes and character sheaves.  In Appendix~\ref{C} we discuss microlocalization as it applies in our context.

\section{Preliminaries}\label{sec-preliminaries}

Throughout the paper $(G,K)=(SL_n,S(GL_p\times GL_q))$, $p+q=n$, unless otherwise stated. We work with algebraic groups over $\bC$ and with sheaves with complex coefficients.  For perverse sheaves we use the conventions of~\cite{BBD}. If $\cF$ is a perverse sheaf up to a shift we often write $\cF[-]$ for the corresponding perverse sheaf. For a finite abelian group $H$, $\hat{H}$ denotes the set of irreducible characters of $H$. We will make use of notations from~\cite{VX}.

We make use of the following concrete description of the pair $(G,K)$. Let  $V=V^+\oplus V^-$ be a complex vector space of dimension $n$, where 
$V^+=\on{span}\{e_1,\ldots,e_p\}$ and $V^-=\on{span}\{f_1,\ldots,f_q\}$.
We take $G=SL_V$ and $K=S(GL_{V^+}\times GL_{V^-})$. 

We write $S_l$ for the symmetric group of $l$-letters, $W_n\text{ for the Weyl group of type $B_n$ (or $C_n$)}$ and $W_0=\{1\}$. For $a\in\bQ_+$, we write $[a]=\max\{n\in\bN\mid n\leq a\}$.

\subsection{Nilpotent orbits and component groups of the centralizers}

To fix notation, we recall the classification of nilpotent $K$-orbits on $\cN_1$ and the description of the components groups $A_K(x)=Z_K(x)/Z_K(x)^0$, $x\in\cN_1$ (see, for example, \cite{CM} and \cite{SS}). 

The nilpotent $K$-orbits in $\cN_1$ are parametrized by signed Young diagrams with signature $(p,q)$ as follows
\beq\label{signed Young diagram}
\lambda=(\lambda_1)^{p_1}_+(\lambda_1)^{q_1}_-(\lambda_2)^{p_2}_+(\lambda_2)^{q_2}_-\cdots(\lambda_s)^{p_s}_+(\lambda_s)^{q_s}_-,
\eeq
where $(\lambda_1)^{p_1+q_1}(\lambda_2)^{p_2+q_2}\cdots(\lambda_s)^{p_s+q_s}$ is a partition of $n$, $\lambda_1>\lambda_2>\cdots>\lambda_s>0$, for $i=1,\ldots, s$, $p_i+q_i>0$ is the multiplicity of $\lambda_i$ in $\lambda$, and  $p_i\geq 0$ (resp. $q_i\geq 0$) is the number of rows of length $\lambda_i$ that begins with sign $+$ (resp. $-$), such that $\sum p_i[(\lambda_i+1)/2]+\sum q_i[\lambda_i/2]=p$ and $\sum p_i[\lambda_i/2]+\sum q_i[(\lambda_i+1)/2]=q$. 

 Given a signed Young diagram $\lambda$, we write $\cO_\lambda$ for the corresponding nilpotent orbit in $\cN_1$. Let $x_\lambda\in \cO_\lambda$, where $\lambda$ is of the form~\eqref{signed Young diagram}. We have
\beq\label{component group SL}
A_K(\cO_\lambda):=A_K(x_\lambda)\cong \bZ/d_\lambda\bZ,\ d_\lambda=\on{gcd}(\lambda_1,\ldots,\lambda_s).
\eeq
Let $H$ be a cyclic group of order $d$ with a generator $\zeta_d$.  We write
 \beq\label{characters-sl}
 \begin{gathered}
 \widehat {H}_m\text{ for the set of order $m$ characters in $\widehat{H}$}.
 \end{gathered}
 \eeq
We say that a character $\chi\in \widehat {H}$ is of order $m$, if $\chi(\zeta_d)^m=1$ and $\chi(\zeta_d)^k\neq1$ for $1\leq k\leq m-1$.
 Note that $\widehat {A_K(\cO_\lambda)}_m\neq\emptyset$ if and only if $m|d_\lambda$.

\subsection{Central character}
 Let $Z(G)$ denote the center of $G$. The group $Z(G)^\theta$ acts on the category $P_K(\Lg_1)$  and also on the corresponding derived category $D_K(\Lg_1)$. Both categories break into a direct sum of subcategories  $P_K(\Lg_1)_\kappa$ and  $D_K(\Lg_1)_\kappa$ where $\kappa$ runs through the irreducible characters $\kappa: Z(G)^\theta \to \bC^*$. The full subcategories $P_K(\Lg_1)_\kappa$ and  $D_K(\Lg_1)_\kappa$ consist of objects on which $Z(G)^\theta$ acts via the character $\kappa$. Then both $\cA_K(\Lg_1)$ and $\Char_K(\Lg_1)$ break into direct sums of subsets $\cA_K(\Lg_1)_{\kappa}$ and $\Char_K(\Lg_1)_\kappa$. The Fourier transform preserves the central character and so we have 
\beqn
\fF(\cA_K(\Lg_1)_\kappa) \ = \ \Char_K(\Lg_1)_\kappa\,.
\eeqn

Note that for $x_\lambda\in\cO_\lambda$ we have a natural surjective map
\beqn
Z(G)^\theta\cong\bZ/n\bZ\to A_K(x_\lambda)\cong\bZ/d_\lambda\bZ.
\eeqn
Let 
\beq\label{nom}
\cA_K(\Lg_1)_m=\bigcup_{\kappa\in(\widehat{\bZ/n\bZ})_m}\cA_K(\Lg_1)_\kappa.
\eeq
 Then we have
\beqn
\cA_K(\Lg_1)_m=\left\{\on{IC}(\cO_\lambda,\cE_{\phi_\lambda})\in\cA_K(\Lg_1)\mid\phi_\lambda\in\widehat{A_K(\cO_\lambda)}_m\right\},\ \ \cA_K(\Lg_1)=\bigcup_{m|n}\cA_K(\Lg_1)_m
\eeqn
where $\cE_{\phi_\lambda}$ denotes the $K$-equivariant local system on $\cO_\lambda$ corresponding to $\phi_\lambda\in\widehat{A_K(\cO_\lambda)}$.

Note that  by~\eqref{signed Young diagram} and~\eqref{characters-sl}, $\cA_K(\Lg_1)_m\neq\emptyset$ for even $m$ only if $p=q$.

 Recall from~\cite{VX} the dual strata $\widecheck \cO$, associated to $K$-orbits $\cO\subset\cN_1$,  which have the following property:
\begin{equation*}
\text{For any $\cF\in \op_K(\cN_1)$ the Fourier transform
$\fF(\cF)$ is smooth along all the  $\widecheck \cO$}\,.
\end{equation*}
 Moreover, for each $\on{IC}(\cO,\cE)\in\op_K(\cN_1)$,
\beq
\label{nilpsupport}
\on{Supp}\fF(\on{IC}(\cO,\cE))=\overline{\widecheck{\cO'}},\text{ for some }\cO'\subset\bar\cO.
\eeq
The central character $\kappa: Z(G)^\theta \to \bC^*$ imposes restrictions on the $\widecheck\cO$ which can carry a character sheaf. We have $Z(G)^\theta \to \pi_1^K(\widecheck\cO)$ and for  $\widecheck\cO$  to support a character sheaf with central character $\kappa$, the character $\kappa$ has to lift to a character of $\pi_1^K(\widecheck\cO)$. In particular, full support character sheaves  with central character $\kappa$ exist only if $\kappa$  factors through $I=Z_K(\fa)/Z_K(\fa)^0$ under the canonical map $Z(G)^\theta \to I$. When such a factorization does not exist we need to replace the nearby cycle construction in~\cite{GVX,VX} with a more general procedure. We present such a construction in the next section.

\section{Generalization of the nearby cycle construction}
\label{central}

In~\cite{GVX,VX} we considered sheaves that are obtained as limits of $K$-equivariant local systems on regular semi-simple orbits. In order to apply the methods of~\cite{G1} the crucial point is that the orbits are closed. Extending the construction to other semi-simple orbits does not give us new character sheaves. To get more character sheaves we extend the construction to $K$-orbits which lie on the strata $\widecheck \cO$. 

We make use of notation of~\cite[\S3.1]{VX}. Consider a nilpotent orbit $\cO$, its associated $\fs\fl_2$-triple $\phi= (e,f,h)$, and the corresponding dual stratum $\widecheck \cO$. Consider the family
\beqn
\widecheck \cO \xrightarrow{\check f} \widecheck \cO\inv K\cong(\La^{\phi}\,)^{rs}\slash W_{\La^\phi}\,.
\eeqn
Let  $a\in (\La^{\phi}\,)^{rs}$ and $\bar a= \check f(a)$. Then the fiber $\check f^{-1}(\bar a)=X_{a+n}= K\cdot (a+n)$ where $n\in\cO$ is any element which commutes with $a$. We choose a character $\chi$ of  $$I^\phi:=Z_K(a+n)/Z_K(a+n)^0$$which gives us a $K$-equivariant local system $\cL_\chi$ on $X_{a+n}$. Consider the IC-sheaf $\operatorname{IC}(X_{a+n}, \cL_\chi)$ on $\bar X_{a+n}$. Let us write 
\beqn
\check f_{\bar a}:\check\cZ_{\bar a} = \overline{\{(x,c)\in\Lg_1\times \bC^* \mid x\in\bar X_{c(a+n)}\}} \to \bC\,.
\eeqn
The  $\operatorname{IC}(X_{a+n},\cL_\chi)$ can also be regarded as a sheaf on $\check\cZ_{\bar a}-\check f_{\bar a}^{-1}(0)$ which allows us to
form the nearby cycle sheaf $$P_{\check\cO,\chi}=\psi_{\check f_{\bar a}} \operatorname{IC}(X_{a+n}, \cL_\chi)[-]\in\on{Perv}_K(\cN_1).$$ In order to analyze the Fourier transform $\fF P_{\check\cO,\chi}$ by the methods in~\cite{G1} we impose the following very restrictive hypothesis:
\begin{ass}
\label{ass}
The characteristic variety of $\operatorname{IC}(X_{a+n}, \cL_\chi)$ is irreducible. 
\end{ass}
We calculate  $\fF P_{\check\cO,\chi}$ under the above assumption. 
Recall the identification (\cite[(3.4)]{VX})
\beq
\label{identification}
\begin{CD}
1 @>>>  Z_{K^\phi}(\fa^\phi)/Z_{K^\phi}(\fa^\phi)^0@>>>\pi_1^{K^\phi}((\Lg_1^\phi)^{rs})@>{\tilde{q}}>> B_{W_{\La^\phi}}@>>> 1
\\
@. @| @| @| @.
\\
1 @>>> Z_K(\fa^\phi+e)/Z_K(\fa^\phi+e)^0@>>>\pi_1^K( \widecheck\cO)@>{\tilde{q}}>> B_{W_{\La^\phi}}@>>> 1\,,
\end{CD}
\eeq
where $B_{W_{\La^\phi}}=\pi_1((\La^\phi)^{rs}/W_{\La^\phi})$ is the braid group associated to ${W_{\La^\phi}}$. Thus, the character $\chi$ can also be viewed as 
a character of $Z_{K^\phi}(\fa^\phi)/Z_{K^\phi}(\fa^\phi)^0$. We then form the nearby cycle sheaf $P_\chi^\phi$ for the datum
$(G^\phi,K^\phi,\chi)$. Thus, by~\cite[Theorem 3.6]{GVX} (\cite[(3.7)]{VX}) we get\footnote{Although the result is stated for connected $G$ the connectedness is not essential.}
\beq\label{mchiphi}
\fF P_\chi^\phi = \operatorname{IC}((\Lg_1^\phi)^{rs}, \cM_\chi^\phi)\,,
\eeq
where the $ \cM_\chi^\phi$ is given by the construction in~\cite{GVX} (see~\cite[\S3.2]{VX}). 

We now have
\begin{thm}
\label{partial nearby}
Under the Assumption~\ref{ass} we have
$$
\fF P_{\check\cO,\chi} \ = \  \operatorname{IC}(\widecheck\cO, \cM_{\check\cO,\chi})
$$
where $\cM_{\check\cO,\chi}$ is isomorphic to the $\cM_\chi^\phi$ in~\eqref{mchiphi} under the identification~\eqref{identification}.
\end{thm}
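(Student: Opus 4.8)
The plan is to deduce Theorem~\ref{partial nearby} from the already-established formula~\eqref{mchiphi} for $(G^\phi,K^\phi,\chi)$ by a local analysis of the Fourier transform along the dual stratum $\widecheck\cO$, followed by a propagation argument. First I would record the basic structural facts: $\fF$ commutes with the $\bC^*$-conic structure and sends $P_K(\cN_1)$-type nearby-cycle outputs to $K$-equivariant perverse sheaves on $\Lg_1$, and by~\eqref{nilpsupport} (applied to the summands of $\operatorname{IC}(X_{a+n},\cL_\chi)$ viewed as a complex on $\Lg_1$, together with the conic limiting procedure defining $P_{\check\cO,\chi}$) the support of $\fF P_{\check\cO,\chi}$ is contained in $\overline{\widecheck\cO}$. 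The key input is Assumption~\ref{ass}: irreducibility of the characteristic variety of $\operatorname{IC}(X_{a+n},\cL_\chi)$ forces the nearby cycle $P_{\check\cO,\chi}$ to be (up to shift) a single $\operatorname{IC}$-sheaf whose singular support is the single nilpotent conormal component attached to $\widecheck\cO$, so that $\fF P_{\check\cO,\chi}$ is smooth precisely along the open stratum $\widecheck\cO$ and its support is exactly $\overline{\widecheck\cO}$. Hence $\fF P_{\check\cO,\chi}=\operatorname{IC}(\widecheck\cO,\cM_{\check\cO,\chi})$ for some $K$-equivariant local system $\cM_{\check\cO,\chi}$ on $\widecheck\cO$; it remains to identify this local system.

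For the identification I would work transversally to $\widecheck\cO$ using the $\fs\fl_2$-triple $\phi=(e,f,h)$ and the transverse slice $e+\Lg_1^f$ (Slodowy-type slice), exactly as in~\cite[\S3.1-3.2]{VX}. Restricting the whole picture to this slice identifies a neighbourhood of a point of $\widecheck\cO$ inside $\Lg_1$ with (a neighbourhood of) $(\Lg_1^\phi)^{rs}$ inside $\Lg_1^\phi$, compatibly with the Fourier transform up to a Fourier transform on the slice — this is the content of~\cite{G1} that underlies the whole method and is why the hypothesis of irreducible characteristic variety is imposed. Under this restriction, the family $\check f_{\bar a}$ restricts to the corresponding family for the datum $(G^\phi,K^\phi)$, and $\operatorname{IC}(X_{a+n},\cL_\chi)$ restricts to $P^\phi_\chi$ (up to shift), so $\psi_{\check f_{\bar a}}\operatorname{IC}(X_{a+n},\cL_\chi)$ restricts to $P^\phi_\chi$ and $\fF P_{\check\cO,\chi}$ restricts to $\fF P^\phi_\chi=\operatorname{IC}((\Lg_1^\phi)^{rs},\cM^\phi_\chi)$ by~\eqref{mchiphi}. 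Therefore the local system $\cM_{\check\cO,\chi}$ on $\widecheck\cO$ pulls back to $\cM^\phi_\chi$ on $(\Lg_1^\phi)^{rs}$; since local systems on $\widecheck\cO$ and on $(\Lg_1^\phi)^{rs}$ are both given by representations of the groups appearing in~\eqref{identification}, and the vertical arrows there are isomorphisms, this pullback is an equivalence and yields $\cM_{\check\cO,\chi}\cong\cM^\phi_\chi$ as claimed. Finally I would note that the monodromy of $\cM_{\check\cO,\chi}$ around the components of $\overline{\widecheck\cO}\setminus\widecheck\cO$ is exactly what is needed for the $\operatorname{IC}$-extension to agree with $\fF P_{\check\cO,\chi}$ globally, which is automatic once support and generic behaviour are pinned down and $\fF P_{\check\cO,\chi}$ is known to be perverse and irreducible.

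The main obstacle is making the transversal-slice reduction fully rigorous in the present equivariant, non-closed-orbit setting: in~\cite{GVX,VX} the orbits $X$ are closed (regular semisimple), and it is their closedness that allows the clean application of Grinberg's method; here the orbit $X_{a+n}$ is not closed, which is precisely why Assumption~\ref{ass} must be imposed. So the real work is to verify that, under Assumption~\ref{ass}, the nearby-cycle complex $P_{\check\cO,\chi}$ still has the single-$\operatorname{IC}$ form and that the restriction-to-slice commutes with $\psi$ and with $\fF$ in the required way — i.e.\ that no extra summands supported on smaller strata appear and that the characteristic-variety hypothesis genuinely propagates through the nearby-cycle functor. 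Once that is in place, the identification of the local system via~\eqref{identification} is formal, and the rest of the argument follows the template of~\cite[\S3]{VX} essentially verbatim, so I would keep that part brief and cite the relevant statements rather than reprove them.
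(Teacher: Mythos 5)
Your first paragraph (support of $\fF P_{\check\cO,\chi}$ contained in $\overline{\widecheck\cO}$, reduction to identifying a local system on $\widecheck\cO$) matches the paper's argument: both appeal to Assumption~\ref{ass} to reduce $\SS(\operatorname{IC}(X_{a+n},\cL_\chi))$ to the single conormal $\overline{T^*_{K\cdot(a+n)}\Lg_1}$, and both invoke~\cite[Theorem~1.1]{G1} to bound $\on{Supp}\,\fF P_{\check\cO,\chi}$ by the image of $\SS$ under the second projection, which is $\overline{\widecheck\cO}$.

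The second paragraph, however, does not supply a proof of the identification of $\cM_{\check\cO,\chi}$, and the mechanism you propose is not the one that actually works. You claim that restriction to the Kostant--Rallis slice $e+\Lg_1^f$ intertwines the Fourier transform on $\Lg_1$ with a Fourier transform on the slice (``compatibly with the Fourier transform up to a Fourier transform on the slice --- this is the content of~\cite{G1}''), and that $\operatorname{IC}(X_{a+n},\cL_\chi)$ ``restricts to $P_\chi^\phi$''. Neither statement is correct as written: the Fourier transform on a vector space does not commute with restriction to an affine subvariety in any naive sense, and $P_\chi^\phi$ is a nearby cycle sheaf on $\cN_1\cap\Lg_1^\phi$, not a restriction of $\operatorname{IC}(X_{a+n},\cL_\chi)$. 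This is exactly the difficulty the paper sidesteps by using \emph{microlocalization} rather than restriction: the microlocal stalk $\mu_{\widecheck\cO}(\fF P_{\check\cO,\chi})$ at a generic covector $(a+e,e)\in T^*_{\widecheck\cO}\Lg_1$ agrees (up to a Maslov twist) with the microlocal stalk $\mu_\cO(P_{\check\cO,\chi})$ at the swapped covector $(e,a+e)\in T^*_\cO\Lg_1$, because $\fF$ interchanges the two projections of $T^*\Lg_1\cong\Lg_1\times\Lg_1$; this is a precise duality, not a ``Fourier transform on the slice''. The latter stalk is then computed by stratified Morse theory / Picard--Lefschetz on $B\cap X_{c(a+e)}\cap(e+\Lg_1^f)$, as laid out in Appendix~\ref{C}. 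This is genuinely different from your proposed slice reduction, and the slice reduction would need a new argument you have not given.

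Moreover, even granting some form of slice compatibility, you have not addressed the substantive computation that the paper \emph{does} carry out: the Lemma identifying the critical points of the linear functional $a+e$ on $B\cap X_{c(a+e)}\cap(e+\Lg_1^f)$ as exactly $\{e+w\cdot(ca)\mid w\in W_{\fa^\phi}\}$, and the subsequent observation that these are precisely the critical points of $a$ on $K^\phi\cdot(ca)\subset\Lg_1^\phi$ with matching Picard--Lefschetz data. Without that Lemma, the claim that $\cM_{\check\cO,\chi}$ pulls back to $\cM_\chi^\phi$ is an assertion, not a proof. You acknowledge this as ``the main obstacle'' and propose to cite~\cite[\S3]{VX} ``essentially verbatim'', but the point of Theorem~\ref{partial nearby} is precisely to extend that argument to the non-closed orbit $X_{a+n}$, where the normal geometry is new (it involves $e$, and one must check the critical locus actually sits in $Z_{\Lg_1}(\fa^\phi)$ and that no spurious critical points enter from $\fu_1^e$). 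So the step you leave as ``formal'' is in fact where the proof lives.
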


We sketch a proof of this theorem. We use the language of stratified Morse theory~\cite{GM}, following the ideas in \cite{G1,G2,G3,GVX}. Consider the singular support \linebreak$\SS(\operatorname{IC}(X_{a+n}, \cL_\chi))\subset \Lg_1\times \Lg_1$. According to~\cite[Theorem 1.1]{G1} the sheaf $\fF P_{\check\cO,\chi}$ is supported on the image of  $\SS( \operatorname{IC}(X_{a+n}, \cL_\chi))$ under the projection to the second factor. Note that~\cite[Theorem 1.1]{G1} is stated for closed orbits and constant sheaves, but it extends to this situation. By Assumption~\ref{ass} we have
\beqn
\SS(\operatorname{IC}(X_{a+n}, \cL_\chi)) \ = \ \overline{T^*_{K\cdot (a+n)}\Lg_1}\,.
\eeqn
Furthermore, 
\beqn
\begin{gathered}
T^*_{K\cdot (a+n)}\Lg_1 \ = \ \{(x,y)\in \Lg_1\times\Lg_1\mid x\in K\cdot (a+n)\ \ \text{and} \ \  [x,y]=0\}=
\\
= K\cdot\{(a+n,y)\mid [a+n,y]=0\}\,.
\end{gathered}
\eeqn
As is not difficult to see from this description, the closure of the projection to the second factor is the closure of $\widecheck\cO$. 
Thus, we conclude that $\fF P_{\check\cO,\chi} \ = \  \operatorname{IC}(\widecheck\cO, \cM_{\check\cO,\chi})$ for some $\cM_{\check\cO,\chi}$. It remains to determine the $\cM_{\check\cO,\chi}$. 

To determine $\cM_{\check\cO,\chi}$ we recall the notion of microlocalization and how it works in our context. For future reference, we do so in a bit more generality.  Let us consider a $K$-equivariant perverse sheaf $\cF$ on $\cN_1$. We will consider the microlocalization $\mu_\cO(\cF)$ of $\cF$ along a nilpotent orbit $\cO$, see~\cite{KS}. The $\mu_\cO(\cF)$ lives on the conormal bundle   $T^*_\cO\Lg_1$ and is generically a local system. We can also consider  $\mu_{\widecheck \cO}(\fF\cF)$ on $T^*_{\widecheck \cO}\Lg_1$. The two sheaves  $\mu_\cO(\cF)$ and $\mu_{\widecheck \cO}(\fF\cF)$ coincide generically\footnote{They coincide up to Maslov twist, but for our current purposes it does not matter.}. It is possible (and important!) to make the notion ``generic" precise, but for the purposes of this paper we do not need to do so. 

Let us apply these considerations to our situations. It suffices to compute the microlocal stalk of $\fF P_{\check\cO,\chi}$ at any non-zero point above $a+e\in\widecheck\cO$ as those points are all generic. We choose the point $(a+e,e)$. Note also that we know a priori that there is no monodromy in the fiber direction. Now, as discussed above, the micro-local stalk $(\fF P_{\check\cO,\chi})_{(a+e,e)}$ coincides with the microlocal stalk $(P_{\check\cO,\chi}){_{(e,a+e)}}$. Thus we are reduced to analyzing $(P_{\check\cO,\chi}){_{(e,a+e)}}$. As explained in Appendix~\ref{C} the stalk $(P_{\check\cO,\chi}){_{(e,a+e)}}$ can be expressed via Picard-Lefschetz theory in terms of the critical points of $a+e$ near $e$  on  $X_{c (a+e)}\cap (e+\Lg_1^f)$, for $|c|$ small.

We have:
\begin{lem}
Let $B$ be a small neighborhood of $e$. The critical points of $a+e$ on $B\cap X_{c (a+e)}\cap (e+\Lg_1^f)$ are
$$
 \{ e+w\cdot (ca)\,|\,w\in W_{\fa^\phi}\}\,.
$$
\end{lem}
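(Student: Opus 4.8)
The plan is to locate the critical points of the linear function given by pairing with $a$ (translated by $e$) on the slice $X_{c(a+e)}\cap(e+\Lg_1^f)$ inside a small ball $B$ around $e$, and to match them with the Weyl-group orbit of $ca$. First I would recall the standard fact from the nearby-cycle picture of \cite{GVX,VX}: by the $\mathfrak{sl}_2$-theory, the transverse slice $e+\Lg_1^f$ to $\cO$ meets $\Lg_1$ in a space on which the residual pair $(G^\phi,K^\phi)$ acts, and $X_{c(a+e)}\cap(e+\Lg_1^f)$ is identified, near $e$, with the regular-semisimple orbit $K^\phi\cdot(e+ca)$ inside $\Lg_1^\phi$ (here I use that $a\in(\La^\phi)^{rs}$ commutes with the triple, so $ca$ sits in the Cartan $\La^\phi$ of the residual pair). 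Under this identification the function ``pairing with $a+e$'' restricts, up to an affine constant, to the function ``pairing with $a$'' on the semisimple orbit $K^\phi\cdot(ca)$, since $e\in\Lg_1^f$ is orthogonal to the relevant directions. So the lemma reduces to the classical computation of critical points of a linear coordinate on a regular semisimple orbit.

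Next I would carry out that classical computation. On a regular semisimple orbit $\mathcal S=K^\phi\cdot(ca)$ in $\Lg_1^\phi$, the differential of the function $x\mapsto\langle x,a\rangle$ (or the appropriate Killing-type pairing used throughout \cite{VX}) at a point $x$ vanishes exactly on the tangent space $[\frakk^\phi,x]$, i.e.\ exactly when $a$ is orthogonal to $[\frakk^\phi,x]$, equivalently $[a,x]$ lies in the appropriate complement; since $\mathcal S$ consists of regular semisimple elements this forces $x$ to lie in the Cartan subspace $\La^\phi$ through $ca$. The intersection $\mathcal S\cap\La^\phi$ is precisely the little Weyl group orbit $W_{\La^\phi}\cdot(ca)$. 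Translating back by $e$ and intersecting with the small ball $B$ (which, for $|c|$ small, picks out exactly the points near $e$, and all points $e+w\cdot(ca)$ are near $e$ for $|c|$ small) gives the stated set $\{e+w\cdot(ca)\mid w\in W_{\fa^\phi}\}$. One should also check nondegeneracy/isolatedness of these critical points, which again follows from regular semisimplicity: the Hessian in the normal directions to $\La^\phi$ inside $\mathcal S$ is nondegenerate because the roots of $\La^\phi$ do not vanish on $ca$.

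The main obstacle, I expect, is not the Lie-theoretic content but making the reduction to the residual slice fully rigorous: one must verify that $X_{c(a+e)}\cap(e+\Lg_1^f)$ really coincides, in a neighborhood of $e$, with $e+(K^\phi\cdot(ca))$ and not merely up to some extra components or a nonreduced structure, and that the function $a+e$ restricted there agrees with the linear function $a$ on the residual orbit up to a constant and higher-order terms that do not affect the critical locus. This is exactly the kind of transversality statement underlying the identification \eqref{identification} and the compatibility of the two nearby-cycle constructions; I would handle it by the same Luna-slice/$\mathfrak{sl}_2$ argument as in \cite[\S3.1]{VX}, being careful that $e$ commutes with $a$ so that $a+e$ genuinely lies in the family $\check\cZ_{\bar a}$. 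Once that identification is in hand, the rest is the routine Morse-theoretic computation sketched above, and it feeds directly into the Picard--Lefschetz analysis of Appendix~\ref{C} used to pin down $\cM_{\check\cO,\chi}$.
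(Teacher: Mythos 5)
Your overall strategy --- reduce to the residual pair $(G^\phi,K^\phi)$ and then read off the critical points as a little-Weyl-group orbit of $ca$ in the Cartan $\fa^\phi$ --- is the same in spirit as the paper's, and the final Morse-theoretic computation on a regular semisimple orbit is correct and matches the last line of the paper's proof ($N_K(\fa^\phi)/Z_K(\fa^\phi)\cong W_{\fa^\phi}$). But the route you take to the reduction is different from the paper's and has a genuine gap that you flag but do not close. You want the \emph{set-theoretic} identification
$X_{c(a+e)}\cap(e+\Lg_1^f)= e+K^\phi\cdot(ca)$ near $e$, i.e.\ that the whole slice intersection is a single $K^\phi$-orbit lying in $e+\Lg_1^\phi$. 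This is considerably stronger than what is needed and is not what the paper proves. The tool you propose for it, a Luna-slice argument, does not directly apply here: Luna's slice theorem is a statement at \emph{closed} orbits, whereas $K\cdot e$ is a nilpotent (non-closed) orbit; the Kostant--Rallis transversal $e+\Lg_1^f$ is a transverse slice but not a Luna \'etale slice, and the natural map $K\times^{K^\phi}(e+\Lg_1^f)\to\Lg_1$ is not a local isomorphism (its source has the wrong dimension, since $\dim Z_K(e)>\dim K^\phi$ when $e\neq 0$). In particular one cannot conclude a priori that the intersection is contained in $e+\Lg_1^\phi$ or is a single $K^\phi$-orbit.

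The paper sidesteps this entirely. It invokes Lemma~\ref{cp} from Appendix~\ref{C}, a Grinberg-style $T^\phi$-equivariance argument: the critical locus in the slice is discrete and $T^\phi$-stable, hence consists of $T^\phi$-fixed points, which forces the critical points into $X_{c(a+e)}\cap(e+\Lg_1^f)\cap Z_{\Lg_1}(\fa^\phi)$. The paper then shows \emph{directly} (without ever identifying the full slice intersection) that any element of this set forces $k\in N_K(\fa^\phi)$ and $k\cdot ce=e$, using that $e+\Ll_1^f$ is a normal slice inside $\fl_1$ and $(G\cdot e)\cap(e+\Ll_1^f)=\{e\}$. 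So the key missing ingredient in your proposal is precisely the $T^\phi$-fixed-point restriction of Lemma~\ref{cp}: it replaces your (unproved, and not obviously true) global identification of the slice intersection with a targeted localization of the critical locus, after which the remaining analysis is elementary. I'd recommend re-organizing your argument around that lemma rather than around the claimed identification $X_{c(a+e)}\cap(e+\Lg_1^f)=e+K^\phi\cdot(ca)$.
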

\begin{proof}
By Lemma~\ref{cp}, the critical points lie in 
$$
(K\cdot ({c(a+e)}))\cap (e+\Lg_1^f)\cap Z_{\Lg_1}(\La^\phi)\,.
$$
Let us write $L=Z_G(\fa^\phi)$. As $\fa^\phi$ is $\theta$-stable, so is $L=Z_G(\fa^\phi)$ giving rise to a new symmetric pair. For the new pair $K_L=K\cap L=Z_K(\fa^\phi)$ and $\fl_1=  Z_{\Lg_1}(\La^\phi)$. As $e$ commutes with $\fa^\phi$ we have $ (e+\Lg_1^f)\cap Z_{\Lg_1}(\La^\phi)= (e+\Ll_1^f)$. Now, as $ka$ is the semisimple part and $ke$ the nilpotent part of $k(a+e)$ we see that $k(ca+ce)\in  Z_{\Lg_1}(\La^\phi)$ implies that $ka\in Z_{\Lg_1}(\La^\phi)$ and $ke\in Z_{\Lg_1}(\La^\phi)$. Therefore $k\fa^\phi k^{-1}$ commutes with both $a$ and $e$. Thus, $k\fa^\phi k^{-1}\subset \Lg_1^e$. We show that $k\fa^\phi k^{-1}=\fa^\phi$.  Consider an element $ka'k^{-1}\in \Lg_1^e\cap\Lg_1^a$, $a'\in\fa^\phi$. Since $\Lg_1^e = \Lg_1^\phi\oplus \fu_1^e$, we can write $ka'k^{-1}= x+y$ with $x\in  \Lg_1^\phi$ and $y\in  \fu_1^e$. Now, $0=[x+y,a]=[x,a]+[y,a]$ and as $[x,a]\in\Lg_1^\phi$ and $[y,a]\in\fu_1^e$ we have $[x,a]=0$ and $[y,a]=0$. Because $a\in(\fa^\phi)^{rs}$ we conclude that $[x,\fa^\phi]=0$ and $[y,\fa^{\phi}]=0$. This means that $x\in \fa^\phi$ and thus $[x,y]=0$. Now as $ka'k^{-1}= x+y$ is semisimple we conclude that $y=0$, i.e., $ka'k^{-1}\in\fa^\phi$. Thus $k\La^\phi k^{-1}=\La^\phi$ and then $k\in N_K(\fa^\phi)$. 

We have shown that $(K\cdot ({c(a+e)}))\cap Z_{\Lg_1}(\La^\phi)\subset \{a''+N_K(\fa^\phi).ce\,|\,a''\in\fa^\phi\}$. This implies that
\beqn
(K\cdot ({c(a+e)}))\cap (e+\Lg_1^f)\cap Z_{\Lg_1}(\La^\phi)\subset \{a''+N_K(\fa^\phi).ce\,|\,a''\in\fa^\phi\}\cap (e+\Lg_1^f).
\eeqn
Now let $a''\in\fa^\phi$ and $k\in N_K(\fa^\phi)$. Then $a''+k.ce\in (e+\Lg_1^f)$ implies that $k.ce\in e+\Lg_1^f$. 
Since $e+\Ll_1^f$ is a normal slice to the orbit $K_L\cdot e=K_L\cdot ce$ in $\fl_1$, it is a subset of the normal slice $e+\Lg^f$ to the $G$-orbit through $e$. Thus the intersection $(G \cdot e) \cap (e+\Ll_1^f) =e$. This implies that $k.ce=e$. We have shown that 
$$
X_{c (a+e)}\cap (e+\Lg_1^f)\cap Z_{\Lg_1}(\La^\phi)  \ \subset \ e+N_K(\fa^\phi).(ca)\subset\ e + \fa^\phi\,.
$$
Finally, note that $N_K(\fa^\phi)/Z_K(\fa^\phi)\cong N_{K^\phi}(\fa^\phi)/Z_{K^\phi}(\fa^\phi)=W_{\fa^\phi}$ (see~\cite[(3.2)]{VX}).

Conversely, the points $e+W_{\fa^\phi}\cdot (ca)$ are critical points of $a+e$ on $(K\cdot ({c(a+e)}))\cap (e+\Lg_1^f)\cap Z_{\Lg_1}(\La^\phi)$.
\end{proof}
 
The critical points in the above lemma are also precisely the critical points of $a\in \fa^\phi$ on $K^\phi\cdot ca\subset \Lg^\phi_1$. Moreover the Picard-Lefschetz theory of $a+e$ on $(K\cdot ({c(a+e)}))\cap (e+\Lg_1^f)\cap Z_{\Lg_1}(\La^\phi)$ is exactly the same as the Picard-Lefschetz theory of $a$ on $K^\phi\cdot ca$. Thus, the theorem follows.

\section{Character sheaves with nilpotent support}\label{sec-biorbital}

In this section we describe the  nilpotent support character sheaves. 
As in~\cite{VX}, let $\underline{\cN_1^0}$ denote the set of Richardson orbits attached to $\theta$-stable Borel groups and
let $
\on{SYD}^0_{p,q}
$
denote the set of signed Young diagrams corresponding to the orbits in $\underline{\cN_1^0}$. In terms of~\eqref{signed Young diagram}, $\lambda\in \on{SYD}^0_{p,q}$ if and only if, for each $i$, either $p_i=0$ or $q_i=0$.

\begin{thm}\label{biorbital-SL} The set of nilpotent support character sheaves  is
\beqn
\begin{gathered}
\on{Char}_K^\rn(\Lg_1)=\{\on{IC}(\cO,\cE_\phi)\,|\,\cO\in\underline{\cN_1^0},\ \phi\in{\widehat{A_K(\cO)}}_{\text{odd}}\},
 \end{gathered}
\eeqn
where ${\widehat{A_K(\cO)}}_{\text{odd}}=\bigcup_{m\text{ odd}}{\widehat{A_K(\cO)}}_{m}$ (see~\eqref{characters-sl}).
\end{thm}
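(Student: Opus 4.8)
The plan is to identify $\on{Char}_K^\rn(\Lg_1)$ as the Fourier transforms of nilpotent orbital complexes whose support is the whole of $\cN_1$ — or more precisely, to recall from~\cite{VX} that a character sheaf has nilpotent support if and only if it is the Fourier transform of a nilpotent orbital complex $\on{IC}(\cO,\cE_\phi)$ whose Fourier transform is again supported on $\cN_1$. By~\eqref{nilpsupport}, $\on{Supp}\fF(\on{IC}(\cO,\cE))=\overline{\widecheck{\cO'}}$ for some $\cO'\subset\bar\cO$, so the condition is that $\widecheck{\cO'}$ is itself a nilpotent orbit. The first step is therefore to pin down, combinatorially, which dual strata $\widecheck\cO$ are nilpotent. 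This is where the set $\underline{\cN_1^0}$ of Richardson orbits attached to $\theta$-stable Borels enters: a dual stratum $\widecheck\cO$ is contained in (indeed equal to) a nilpotent orbit precisely when the associated $\fsl_2$-triple has $\fa^\phi=0$, i.e. $W_{\La^\phi}=W_0=\{1\}$, and the combinatorial translation of this — given in the paragraph before the theorem — is exactly the condition that $\lambda\in\on{SYD}^0_{p,q}$ (for each $i$, $p_i=0$ or $q_i=0$).

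\textbf{Key steps.} First I would recall the general principle (from~\cite[\S3]{VX} together with Gabber's theorem as used in Section~\ref{central}): for $\on{IC}(\cO,\cE_\phi)\in\cA_K(\Lg_1)$, the Fourier transform is supported on $\overline{\widecheck\cO}$ and is generically a local system there whose microlocal data is governed by the construction attached to $(G^\phi,K^\phi)$; in particular $\fF\on{IC}(\cO,\cE_\phi)$ has nilpotent support iff $\widecheck\cO$ is nilpotent iff $\fa^\phi=0$. Second, I would verify the combinatorial claim that $\fa^\phi=0$ (equivalently $\cO\in\underline{\cN_1^0}$) amounts to $\lambda\in\on{SYD}^0_{p,q}$: when some row-length $\lambda_i$ occurs with both signs ($p_i>0$ and $q_i>0$) one gets a nonzero part of $\fa^\phi$, and conversely when each $\lambda_i$ is "pure" the centralizer of the $\fsl_2$-triple in $\Lg_1$ has no semisimple part, so $\widecheck\cO$ is a single nilpotent orbit — this is the content of $\lambda\in\on{SYD}^0_{p,q}$ and is where one matches against Richardson orbits of $\theta$-stable Borels. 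Third, for such $\cO$ I would determine \emph{which} local systems $\cE_\phi$ survive, i.e. which $\phi\in\widehat{A_K(\cO)}$ give a character sheaf with support exactly $\overline{\widecheck\cO}=\bar\cO$ and not something smaller or zero. Here the central character constraint from the end of Section~\ref{sec-preliminaries} is crucial: $\fF\on{IC}(\cO,\cE_\phi)$ lies in $\Char_K(\Lg_1)_\kappa$ where $\kappa$ corresponds to $\phi$ via $Z(G)^\theta\cong\bZ/n\bZ\to A_K(\cO)\cong\bZ/d_\lambda\bZ$, and by~\eqref{nom} and the remark that $\cA_K(\Lg_1)_m\neq\emptyset$ for even $m$ only if $p=q$, one expects full/nilpotent support to force $\phi$ of odd order. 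Concretely, I would use the explicit $\cM_\chi^\phi$ from~\cite{GVX} (via Theorem~\ref{partial nearby}, or rather its $\fa^\phi=0$ degenerate case) to see that the local system on $\widecheck\cO=\cO$ is nonzero exactly for $\phi\in\widehat{A_K(\cO)}_{\text{odd}}$, the even-order characters being killed by the Fourier transform / sign considerations in the construction.

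\textbf{Main obstacle.} The routine part is the combinatorial identification $\underline{\cN_1^0}\leftrightarrow\on{SYD}^0_{p,q}$; the genuine difficulty is the third step — precisely determining the odd-order condition on $\phi$. One must show both inclusions: that every $\on{IC}(\cO,\cE_\phi)$ with $\cO\in\underline{\cN_1^0}$ and $\phi$ of odd order actually produces a character sheaf with nilpotent support (nonvanishing and no extra support), and conversely that no even-order $\phi$, and no $\cO\notin\underline{\cN_1^0}$, can give nilpotent support. The forward direction should follow by exhibiting these sheaves via parabolic induction from the cuspidal ones on Levi subgroups together with the Fourier-transform computation of Section~\ref{central}; the reverse direction requires knowing that a nonzero full-support-on-$\widecheck\cO$ condition forces $\widecheck\cO$ nilpotent \emph{and} that the microlocal local system $\cM_{\check\cO,\chi}$ is nonzero only for $\chi$ (hence $\phi$) of odd order — the latter coming from the explicit formula for $\cM_\chi^\phi$ in~\cite{GVX}, where the quadratic-form / Weil-representation factor contributes a sign that annihilates the even part. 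I would also need to check compatibility of the bijection $\fF(\cA_K(\Lg_1)_\kappa)=\Char_K(\Lg_1)_\kappa$ with the support statement~\eqref{nilpsupport} to conclude that these are \emph{all} the nilpotent support character sheaves.
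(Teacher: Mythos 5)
Your proposal has a genuine conceptual gap and a factual error, and the paper's proof in fact proceeds quite differently.

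The gap is at the very start: you pass from $\widecheck{\cO'}$ to $\widecheck\cO$ and from then on reason as if $\on{Supp}\fF\on{IC}(\cO,\cE_\phi)=\overline{\widecheck\cO}$, i.e.\ as if the support never drops to a smaller stratum. But determining whether $\cO'=\cO$ or $\cO'\subsetneq\cO$ is precisely the content of the theorem, and it depends on $\phi$. Your asserted equivalence ``$\fF\on{IC}(\cO,\cE_\phi)$ has nilpotent support iff $\widecheck\cO$ is nilpotent iff $\fa^\phi=0$'' is false: for $\cO\in\underline{\cN_1^0}$ with $d_\lambda$ even and $\phi$ of even order $2k$, the support of $\fF\on{IC}(\cO,\cE_\phi)$ drops to $\overline{\widecheck\cO_{k^l_+k^l_-}}$ with $l=n/2k>0$ (see the $\Char_K(\Lg_1)_{2k}$ part of Theorem~\ref{char-sl} and the bijection in \S6.3), which is never a nilpotent orbit. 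This also shows that your explanation of the odd-order condition is wrong: the even-order characters are not ``killed by the Fourier transform / sign considerations'' or any Weil-representation factor --- $\fF$ is an equivalence, so nothing is killed. They simply produce character sheaves with non-nilpotent (often full) support; e.g.\ $\cO_{(2)^2_+}$ in $(SL_4,S(GL_2\times GL_2))$ with $\phi$ of order $2$ has $\fF\on{IC}(\cO,\cE_\phi)$ supported on all of $\Lg_1$.

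The paper's actual argument for the forward inclusion is a two-step reduction you do not reproduce: Lemma~\ref{lemma-induction sl} parabolically induces a fixed $(\cO_L,\cE_L)$ on a Levi $L\cong S(GL_{ml_1}\times\cdots\times GL_{ml_{\lambda_1}})$ to hit $\on{IC}(\cO_\lambda,\cE_{\phi_m})$ as a summand, and then Lemma~\ref{biorbital-base} handles the ``one block'' case $\cO_{m^l_+}$ on $(SL_{ml},S(GL_{kl+l}\times GL_{kl}))$. The odd condition enters there, concretely: if $\fF$ of the order-$m$ sheaf had a component on some $\widecheck\cO_\mu$, then $m$ would have to divide $\check d_\cO=\gcd(2\gcd(\mu_i: p_i=q_i),\gcd(\mu_i: p_i\neq q_i))$, and it is precisely for \emph{odd} $m$ that $m\mid\check d_\cO$ forces $m\mid d_\mu$, which then rules out any $\cO_\mu\subsetneq\cO_{m^l_+}$; the self-duality of the distinguished orbit $\cO_{m^l_+}$ closes the loop. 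Your ``parabolic induction from cuspidal ones'' is not quite this, and the cuspidal nilpotent support sheaves (Corollary~\ref{cusp-odd}) are in fact very few ($\cO_{n_\pm}$, $n$ odd). Finally, the reverse inclusion (that these exhaust $\Char_K^\rn$) is not proved directly at all in the paper --- it is deferred to Theorem~\ref{char-sl}, which is established by a counting/bijection argument after the nearby-cycle and induction constructions; your direct route to the reverse direction rests on the incorrect ``sign kills even'' claim and does not work.
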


 In this section we show that the IC sheaves in Theorem~\ref{biorbital-SL} are indeed nilpotent support character sheaves. That they constitute all nilpotent character sheaves follows from Theorem~\ref{char-sl}. Let $\cO=\cO_\lambda\in\underline{\cN_1^0}$, let $m$ be an odd positive integer and we write $\cE_{\phi_m}$ for an irreducible $K$-equivariant local system on $\cO_{\lambda}$ given by $\phi_m\in\widehat{A_K(\cO_{\lambda})}_m$. We show that
\beq\label{sl-nilp}
\on{IC}(\cO_\lambda,\cE_{\phi_m})\in\on{Char}^\rn_K(\Lg_1).
\eeq
Since $m|d_\lambda$, we can assume that
\beqn
\cO_\lambda=(m\lambda_1)_{\delta_1}(m\lambda_2)_{\delta_2}\cdots(m\lambda_s)_{\delta_s},
\eeqn
where $\lambda_1\geq\lambda_2\geq\cdots\geq\lambda_s>0$, $\delta_i\in\{+,-\}$, $1\leq i\leq s$, and $\delta_{i}=\delta_j$ if $\lambda_i=\lambda_j$. 

We associate with $\lambda$ a sequence of positive numbers $l_1$, $l_2$,\ldots, $l_{\lambda_1}$ and alternating signs $\epsilon_i\in\{+,-\}$, $1\leq i\leq \lambda_1$, such that $l_1< l_2<\cdots< l_{j_0}=s\geq l_{j_0+1}\geq\cdots\geq l_{\lambda_1}$ as follows. Let $1\leq l_a\leq s$, $a=1,\ldots,j_0$, be such that $\delta_{l_{a-1}+1}=\cdots=\delta_{l_a}$ and $\delta_{l_{a}}\neq \delta_{l_{a}+1}$, where $l_0=0$, $l_{j_0}=s$ and by definition $\delta_{s}\neq \delta_{s+1}$. It is clear that $l_a< l_{a+1}$, $a=1,\ldots,j_0-1$.  Let us write the transpose partition $(l_{j_0},\ldots,l_1)^t=(\lambda_1',\ldots,\lambda_s')$. Now we set the transpose partition of $(\lambda_1-\lambda_1',\ldots,\lambda_s-\lambda_s')$ to be $(l_{j_0+1}\geq\cdots\geq l_{j})$. Then $j=\lambda_1$. We set  $\epsilon_1=\delta_1$ and $\epsilon_{i+1}=-\epsilon_{i}$, $1\leq i\leq \lambda_1-1$. 

Let $k=(m-1)/2$. Note that $\sum_{i=1}^{\lambda_1}l_i=\sum_{j=1}^s\lambda_j$. Consider the $\theta$-stable parabolic subgroup $P$ of $G$ which stabilizes the flag
$
0\subset V_1\subset V_2\subset\cdots\subset V_{\lambda_1-1}\subset V_{\lambda_1}=V,
$
where $V_i=\on{span}\{e_1,\ldots,e_{n_i}\}\oplus\on{span}\{f_1,\ldots,f_{m_i}\}$, $n_i=\sum_{a=\lambda_1-i+1}^{\lambda_1}l_a(k+\delta_{\epsilon_a,+})$, $m_i=\sum_{a=\lambda_1-i+1}^{\lambda_1}l_a(k+\delta_{\epsilon_a,-})$. Let $L$ be the natural $\theta$-stable Levi subgroup in $P$. Note that $\on{dim}V_i=m\sum_{a=\lambda_1-i+1}^{\lambda_1}l_a$. We have:
\beqn
\begin{gathered}
L\cong S(GL_{m\,l_1}\times\cdots\times GL_{m\,l_{\lambda_1}})\,\\ L^\theta\cong S(GL_{(k+\delta_{\epsilon_1,+})l_1}\times GL_{(k+\delta_{\epsilon_1,-})l_1}\times\cdots \times GL_{(k+\delta_{\epsilon_{\lambda_1},+})l_{\lambda_1}}\times GL_{(k+\delta_{\epsilon_{\lambda_1},-})l_{\lambda_1}}).
\end{gathered}
\eeqn
Consider the nilpotent $L^\theta$-orbit $\cO_{L}=\cO_{m^{l_1}_{\epsilon_1}}\times\cdots\times\cO_{m^{l_{\lambda_1}}_{\epsilon_{\lambda_1}}}\subset\cN_{\fl_1}$. 
\begin{lemma}\label{lemma-induction sl}There exists an irreducible $L^\theta$-equivariant local system $\cE_L$ on $\cO_L$ given by an order $m$ irreducible character of $A_{L^\theta}(\cO_L)=\bZ/m\bZ$ such that
\beqn
\on{Ind}_{\fl_1\subset\fp_1}^{\Lg_1}\on{IC}(\cO_L,\cE_L)=\on{IC}(\cO_\lambda,\cE_{\phi_m})\oplus\cdots\text{ (up to shift)}.
\eeqn
\end{lemma}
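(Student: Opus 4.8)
## Proof proposal for Lemma~\ref{lemma-induction sl}

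The plan is to compute the parabolic induction $\on{Ind}_{\fl_1\subset\fp_1}^{\Lg_1}\on{IC}(\cO_L,\cE_L)$ explicitly using the transversal-slice / restriction description of induction, and to identify the orbit $\cO_\lambda$ as the open dense orbit in the support of the induced complex. First I would choose $\cE_L$ concretely: on each factor $\cO_{m^{l_a}_{\epsilon_a}}\subset\cN_{\fl_1^{(a)}}$, the component group is $A_{(L^\theta)^{(a)}}(\cO_{m^{l_a}_{\epsilon_a}})\cong\bZ/m\bZ$, and I take the local system given by a fixed order-$m$ character $\phi_m^{(a)}$; the product character has order $m$ on $A_{L^\theta}(\cO_L)=\bZ/m\bZ$ (the diagonal copy, via the natural map $A_{L^\theta}(\cO_L)\to\prod_a A_{(L^\theta)^{(a)}}$). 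One must be a little careful that the characters are chosen compatibly so that their ``diagonal restriction'' is a generator-dual, i.e. genuinely of order $m$; this is where the oddness of $m$ and the parametrization in~\eqref{characters-sl} enter.

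Next I would determine the support of the induced sheaf. By the standard transitivity and support estimates for induction of orbital complexes (as in~\cite{Lu, VX}), $\on{Supp}\on{Ind}_{\fl_1\subset\fp_1}^{\Lg_1}\on{IC}(\cO_L,\cE_L)=\overline{K\cdot(\cO_L+\fn_1)}$, and I would check that the dense $K$-orbit in $K\cdot(\cO_L+\fn_1)$ is exactly $\cO_\lambda$. This is a combinatorial computation with signed Young diagrams: the flag defining $P$ was rigged (via the numbers $n_i,m_i$ and the alternating signs $\epsilon_i$) precisely so that ``spreading out'' the $l_a$ Jordan blocks of size $m$ across the $\lambda_1$ steps of the flag produces, after taking the generic nilpotent in $\cO_L+\fn_1$, blocks of sizes $m\lambda_1,\dots,m\lambda_s$ with the prescribed signs $\delta_j$. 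Concretely one verifies that $\dim V_i=m\sum_{a>\lambda_1-i}l_a$ matches the rank sequence $\dim\ker(x_\lambda^i)$ for $x_\lambda\in\cO_\lambda$, using $\sum_i l_i=\sum_j\lambda_j$ and the transpose relations among $(l_a)$, $(\lambda_j)$, $(\lambda_j')$; the sign bookkeeping comes from $\delta_{\epsilon_a,\pm}$ and the alternation $\epsilon_{i+1}=-\epsilon_i$.

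Then I would match the local systems. Since $\cE_L$ has order-$m$ central character and the natural surjection $Z(G)^\theta\cong\bZ/n\bZ\to A_K(\cO_\lambda)\cong\bZ/d_\lambda\bZ$ is compatible with the corresponding map for $L$, the summand of the induced sheaf supported on $\overline{\cO_\lambda}$ must be $\on{IC}(\cO_\lambda,\cE_\psi)$ for some $\psi\in\widehat{A_K(\cO_\lambda)}$ whose restriction to the image of $Z(G)^\theta$ agrees with that of $\phi_m$; tracking central characters through induction (they are preserved) forces $\psi$ to have order $m$, and one identifies $\psi=\phi_m$ by comparing the monodromy of the induced local system along a suitable curve in $\cO_\lambda$ approaching $\cO_L+\fn_1$ — equivalently, by the argument that $A_K(\cO_\lambda)$ is generated by the class of a loop coming from the $GL_{m l_a}$-factors, on which the monodromy is dictated by $\phi_m^{(a)}$. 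Finally one checks the multiplicity of $\on{IC}(\cO_\lambda,\cE_{\phi_m})$ in the induced complex is nonzero, which follows from genericity: restricting the induced sheaf to $\cO_\lambda$ is (a shift of) a local system containing $\cE_{\phi_m}$ as a summand, because the fiber of the induction map over a point of $\cO_\lambda$ has a component on which the relevant monodromy is exactly $\phi_m$.

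The main obstacle I expect is the combinatorial verification that the dense orbit of $K\cdot(\cO_L+\fn_1)$ is precisely $\cO_\lambda$ \emph{with the correct signs} — the passage from the unsigned partition data (matching rank sequences) to the signed Young diagram requires carefully tracking how the $\theta$-grading interacts with the flag, and in particular checking the conditions $\delta_i=\delta_j$ when $\lambda_i=\lambda_j$ and the signature constraints $\sum p_i[(\lambda_i+1)/2]+\sum q_i[\lambda_i/2]=p$. A secondary subtlety is ensuring the chosen character of $A_{L^\theta}(\cO_L)$ is genuinely order $m$ and not order a proper divisor, which is where the hypothesis that $m$ is odd is used (it guarantees $\widehat{A_K(\cO_\lambda)}_m\neq\emptyset$ for the relevant $\lambda\in\on{SYD}^0_{p,q}$ and is compatible with the even-$m$ restriction that $p=q$, noted after~\eqref{nom}).
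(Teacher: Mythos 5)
Your overall strategy matches the paper's: compute the parabolic induction, show its support is $\overline{\cO_\lambda}$, and identify the local system. But there are genuine gaps in the middle step and some misattributions of hypotheses.

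The paper proves $\on{Im}\pi=\overline{\cO_\lambda}$ (where $\pi:K\times^{P_K}(\bar\cO_L+(\Ln_P)_1)\to\Lg_1$) in three moves: (a) Lusztig--Spaltenstein orbit induction gives $G\cdot\cO_\lambda=\on{Ind}_{\Ll}^{\Lg}(L\cdot\cO_L)$ at the level of $G$-orbits; (b) a dimension count shows $\dim\bigl(K\times^{P_K}(\bar\cO_L+(\Ln_P)_1)\bigr)=\dim\cO_\lambda$; (c) the decisive step is an \emph{explicit inductive construction} of a flag in $\pi^{-1}(x)$ from a Jordan basis of $x\in\cO_\lambda$, which is what actually pins down the signed orbit. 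Your proposal replaces (a)--(c) by ``verify $\dim V_i$ matches the rank sequence $\dim\ker(x_\lambda^i)$.'' This formula is not correct: the flags in $\pi^{-1}(x)$ are \emph{not} the kernel filtration of $x$ (already $\dim V_1=m\,l_{\lambda_1}$ need not equal $\dim\ker(x^m)=ms$ when $\lambda_1>j_0$). More fundamentally, rank sequences only see the unsigned Jordan type, while the content of the lemma is about a specific \emph{signed} orbit. You rightly identify the sign bookkeeping as ``the main obstacle'' but offer no mechanism to resolve it; the paper's explicit flag/Jordan-basis construction is precisely that mechanism and is the part of the argument that cannot be waved away.

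You also misidentify where the hypothesis ``$m$ odd'' enters. The component group $A_{L^\theta}(\cO_L)\cong\bZ/m\bZ$ has order-$m$ characters regardless of the parity of $m$, so there is no subtlety in choosing $\cE_L$ there. Oddness is used because the Levi $L^\theta$ is constructed with $k=(m-1)/2$ appearing as a dimension (so $m$ must be odd for the construction to make sense at all), and more globally because for even $m$ one needs $p=q$ and the whole analysis changes. On the plus side, you address the identification of the local system on $\cO_\lambda$ via central character and monodromy, which the paper treats as routine and leaves implicit; that is a reasonable way to fill in that step.
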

\begin{proof}
Consider the map $\pi:K\times^{P_K}(\bar\cO_L+(\Ln_P)_1)\to\Lg_1$. We claim that $\on{Im}\pi=\bar\cO_\lambda$.  First note that $G.(\overline{L.\cO_L}+\Ln_P)=\overline{G.{\cO}_{\lambda}}$ since $G.\cO_\lambda=\on{Ind}_{\Ll}^\Lg(L.\cO_L)$ is the induced orbit in the sense of Lusztig-Spaltenstein (see for example~\cite[\S7]{CM}). It follows that 
\beqn
\begin{gathered}
\on{dim}(K\times^{P_K}(\bar\cO_L+(\Ln_P)_1))=\dim\Ln_P+\dim\cO_L=\frac{1}{2}\dim(G\times^P(\overline{L.\cO_L}+\Ln_P))\\=\frac{1}{2}\dim(G.\cO_\lambda)=\dim\cO_\lambda.
\end{gathered}
\eeqn
Thus it suffices to show that for $x\in\cO_\lambda$, $\pi^{-1}(x)\neq\emptyset$. This can be seen using induction on $j$. Without loss of generality we assume that $\epsilon_1=+$. We assume that  $\cO_{m^{l_2}_{\epsilon_2}}\times\cdots\times\cO_{m^{l_{\lambda_1}}_{\epsilon_{\lambda_1}}}$ gives rise to the orbit $\cO_{\mu}$ in the same way as $\cO_L$ giving rise to $\cO_\lambda$. Then $\mu=(m\lambda_1-m)_{-}\cdots(m\lambda_{l_1}-m)_{-}(m{\lambda_{l_1+1}})_{-}\cdots(m\lambda_s)_{\epsilon_{\lambda_1}}$. That is, $\mu$ differs with $\lambda$ only for the first $l_1$ terms. Let $l_i'$, $i=1,\ldots,\lambda_1-1$, be defined for $\mu$ as $l_i$ for $\lambda$. Then $l_i'=l_{i+1}$. Consider the  $\theta$-stable Levi subgroup $L'=S(GL_{ml_1}\times GL_{n-ml_1})$ with $(L')^\theta=S(GL_{kl_1+l_1}\times GL_{kl_1}\times GL_{p-kl_1-l_1}\times GL_{q-kl_1-l_1})$ contained in the parabolic subgroup $P'$ corresponding to the  flag $0\subset V_{n-ml_1}=\on{span}\{e_1,\ldots,e_{p-kl_1-l_1},f_1,\ldots,f_{q-kl_1}\}\subset V$. Let $\cO_{L'}=\cO_{m^{l_1}_{\epsilon_1}}\times\cO_{\mu}\subset\cN_{\Ll_1'}$. Consider the map $\pi':K\times^{P'_K}(\bar\cO_{L'}+(\Ln_{P'})_1)\to\Lg_1$. Let $x\in\cO_\lambda$. One checks that $(0\subset W_{n-ml_1}\subset V)\in(\pi')^{-1}(x)$, where $W_{n-ml_1}=\{x^{m+h_j}v_j,1\leq j\leq l_1, 0\leq h_j\geq m\lambda_j-m-1,\ x^{h_j}v_j,l_1+1\leq j\leq s,0\leq h_j\leq m\lambda_j-1\}$ and $\{x^{h_j}v_j,\,1\leq j\leq s,\,0\leq h_j\leq m\lambda_j-1\}$ is a Jordan basis of $x$. Moreover, $x|_{W_{m-nl_1}}\in\cO_{\mu}$. Applying induction hypothesis to $\cO_{\mu}$, we see that $\pi^{-1}(x)\neq \emptyset$. The lemma follows.
\end{proof}

 Consider the orbit $\cO_{m^l_+}$ for the symmetric pair $(G',K')=(SL_{ml},S(GL_{kl+l}\times GL_{kl}))$ (where $k=(m-1)/2$). Let $\cE_m'$ be an irreducible $K'$-equivariant local system on $\cO_{m^l_+}$ given by an order $m$ irreducible character of $A_{K'}(\cO_{m^l_+})$.
\begin{lemma}\label{biorbital-base}
We have 
$\on{IC}(\cO_{m^l_+},\cE_m')\in\on{Char}_{K'}^\rn(\Lg'_1).$ Moreover, $$\on{Supp}\fF(\on{IC}(\cO_{m^l_+},\cE_m'))=\bar\cO_{m^l_+}.$$
\end{lemma}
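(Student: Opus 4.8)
\textbf{Proof plan for Lemma~\ref{biorbital-base}.}

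The plan is to identify $\on{IC}(\cO_{m^l_+},\cE_m')$ as the Fourier transform of one of the character sheaves produced by the enhanced nearby cycle construction of Section~\ref{central}, and then to read off its support from the shape of that construction. Concretely, for the symmetric pair $(G',K')=(SL_{ml},S(GL_{kl+l}\times GL_{kl}))$ consider the nilpotent orbit $\cO = \cO_{m^l_+}$, its $\fs\fl_2$-triple $\phi=(e,f,h)$, and the associated dual stratum $\widecheck\cO$. The centralizer data: $G^\phi = Z_G(\phi)$ is, up to the determinant-one condition, $GL_l$, and the new symmetric pair $(G^\phi,K^\phi)$ is $(SL_l, S(GL_?\times GL_?))$ — here the split/quasi-split shape is forced because the single orbit $\cO_{m^l_+}$ is ``as regular as allowed'' for that central character. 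First I would verify Assumption~\ref{ass} for the relevant $\on{IC}(X_{a+n},\cL_\chi)$: since the semisimple part $a$ lives in a Cartan $\fa^\phi$ of $\fl^\phi_1$ and the orbit $\cO_{m^l_+}$ is the regular one for the smaller pair (or the relevant biorbital one), the characteristic variety irreducibility follows from the analysis in~\cite{GVX} as used in~\cite{VX}; this is essentially the statement that for the base pair the relevant IC sheaf is clean, which one checks by a direct dimension count of the conormal to $K\cdot(a+n)$.

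Next I would apply Theorem~\ref{partial nearby}: choosing $\chi$ to be an order $m$ character of $I^\phi = Z_{K'}(a+e)/Z_{K'}(a+e)^0$ (which maps onto $A_{K'}(\cO_{m^l_+})\cong\bZ/m\bZ$, so such a $\chi$ exists), we get
\beqn
\fF P_{\widecheck\cO,\chi} \ = \ \on{IC}(\widecheck\cO,\cM_{\widecheck\cO,\chi})
\eeqn
where $\cM_{\widecheck\cO,\chi}$ corresponds to the $\cM_\chi^\phi$ computed in~\cite{GVX} for the datum $(G^\phi,K^\phi,\chi)$. The key point is then to show that the Fourier transform of this character sheaf is exactly $\on{IC}(\cO_{m^l_+},\cE_m')$ — i.e. that $P_{\widecheck\cO,\chi}$ itself, as a nearby cycle of an IC sheaf on $\widecheck\cO$, equals $\on{IC}(\cO_{m^l_+},\cE_m')$ up to shift. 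Here I would use that $P_{\widecheck\cO,\chi}\in\on{Perv}_{K'}(\cN'_1)$ is supported on $\bar\cO_{m^l_+}$ (since $\widecheck{\cO}$ specializes to $\cO_{m^l_+}$) and that its restriction to $\cO_{m^l_+}$ is the local system attached to the monodromy of $\chi$, which is precisely $\cE_m'$ because the natural map $\pi_1^{K'}(\widecheck\cO)\to A_{K'}(\cO_{m^l_+})$ carries $\chi$ to $\phi_m$; the orbital complex $\on{IC}(\cO_{m^l_+},\cE_m')$ being the unique irreducible object with that support and that generic stalk, the identification follows.

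Finally, applying $\fF$ (an involution up to sign) gives $\fF\on{IC}(\cO_{m^l_+},\cE_m') = \on{IC}(\widecheck\cO,\cM_{\widecheck\cO,\chi})$, which by construction has full support on $\overline{\widecheck\cO}$ and hence nilpotent singular support, so $\on{IC}(\cO_{m^l_+},\cE_m')\in\on{Char}_{K'}^\rn(\Lg'_1)$; and since $\on{Supp}\fF\on{IC}(\cO_{m^l_+},\cE_m') = \overline{\widecheck\cO}$ while by~\eqref{nilpsupport} this support must be $\overline{\widecheck{\cO'}}$ for some $\cO'\subset\bar\cO_{m^l_+}$, comparing dimensions forces $\cO'=\cO_{m^l_+}$, i.e. $\on{Supp}\fF\on{IC}(\cO_{m^l_+},\cE_m') = \overline{\widecheck{\cO_{m^l_+}}}$. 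Wait — the claimed statement is $\on{Supp}\fF\on{IC}(\cO_{m^l_+},\cE_m') = \bar\cO_{m^l_+}$, so the last step is instead to observe that for this base pair $\widecheck{\cO_{m^l_+}}$ \emph{is} the open dense subset of $\bar\cO_{m^l_+}$ (a feature of the quasi-split situation where the regular/biorbital orbit is self-dual), which one checks directly from the description $T^*_{K'\cdot(a+e)}\Lg'_1 = K'\cdot\{(a+e,y)\mid [a+e,y]=0\}$ appearing in the proof of Theorem~\ref{partial nearby}. The main obstacle I expect is precisely this last identification — verifying that the dual stratum $\widecheck{\cO_{m^l_+}}$ coincides (as a stratum, up to closure) with $\cO_{m^l_+}$ for the base pair, equivalently that the biorbital orbit is biorbital in the strong self-dual sense — together with the verification of Assumption~\ref{ass}; both are concrete but require care with the $\fs\fl_2$-theory and the combinatorics of signed Young diagrams under $\theta$-stable Levi restriction.
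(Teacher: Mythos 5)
Your approach is genuinely different from the paper's, and it runs into a fundamental snag that you do not resolve. For the orbit $\cO_{m^l_+}$, all $l$ rows have the same length $m$ and the \emph{same} sign $+$, so the centralizer $G^\phi\cong GL_l$ of the $\fs\fl_2$-triple (permuting the $l$ identical Jordan blocks) preserves the $\pm$-grading on $V$ and hence lies entirely in $K'$: you get $K^\phi=G^\phi$, so $\Lg_1^\phi=0$ and $\fa^\phi=0$, i.e.\ the orbit is \emph{distinguished}. Your posited pair $(G^\phi,K^\phi)=(SL_l,S(GL_?\times GL_?))$ is therefore wrong, and the generic $a\in(\fa^\phi)^{rs}$ you want does not exist. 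Consequently the enhanced nearby cycle construction collapses in this case: with $a=0$ one has $X_{a+n}=\cO_{m^l_+}$, the family $\check f_{\bar a}$ becomes the trivial projection $\bar\cO_{m^l_+}\times\bC\to\bC$, and $P_{\widecheck\cO,\chi}$ is tautologically $\on{IC}(\cO_{m^l_+},\cE_m')$. Running Theorem~\ref{partial nearby} here just restates ``the Fourier transform of $\on{IC}(\cO_{m^l_+},\cE_m')$ is supported on $\bar\cO_{m^l_+}$ provided its characteristic variety is irreducible (Assumption~\ref{ass})'' --- that is, your route defers the entire content of the lemma to the cleanness hypothesis, which you neither verify nor reduce to something easier. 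You also do not address why $P_{\widecheck\cO,\chi}$ should be \emph{irreducible} (a nearby cycle perverse sheaf generally is not); in this degenerate case it is, but only trivially, which is precisely the sign that the construction buys nothing.

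The paper's proof is instead a central character count that bypasses all of this. $Z(G')^\theta\cong\bZ/ml\bZ$ acts on $\on{IC}(\cO_{m^l_+},\cE_m')$ via an order-$m$ character, and by~\eqref{nilpsupport} its Fourier transform is $\on{IC}(\widecheck\cO_\mu,-)$ for some $\cO_\mu\subset\bar\cO_{m^l_+}$. Carrying the central character through~\eqref{identification} forces $m\mid\check d_{\cO_\mu}$, hence (since $m$ is odd) $m\mid d_\mu$; a signed-Young-diagram check shows the only $\mu$ with $\cO_\mu\subset\bar\cO_{m^l_+}$ and $m\mid d_\mu$ is $\mu=m^l$ itself. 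Finally, $\cO_{m^l_+}$ is distinguished, hence self-dual by Popov's theorem, so $\overline{\widecheck\cO_{m^l_+}}=\bar\cO_{m^l_+}$. You do flag at the end that you need $\widecheck\cO_{m^l_+}$ to be dense in $\bar\cO_{m^l_+}$, but you do not notice that this self-duality is the very distinguishedness that degenerates your nearby-cycle setup, nor do you supply the combinatorial central-character elimination that is the actual engine of the proof.
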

\begin{proof}
 The lemma follows by central character considerations. Note that $Z_{G'}^\theta=\bZ/{ml}\bZ\to A_{K'}(\cO_{m^l_+})=\bZ/m\bZ$ is given by $\zeta_{ml}\mapsto\zeta_m$, where $\zeta_d$ denotes a generator of $\bZ/d\bZ$. Thus $Z_{G'}^\theta$ acts on $\on{IC}(\cO_{m^l_+},\cE_m')$ via an order $m$ irreducible character.
 
Consider a nilpotent orbit $\cO=\cO_\mu$, $\mu=(\mu_1)^{p_1}_+(\mu_1)^{q_1}_-(\mu_2)^{p_2}_+(\mu_2)^{q_2}_-\cdots(\mu_s)^{p_s}_+(\mu_s)^{q_s}_-$. Let $\phi_\cO$ be the associated $\mathfrak{sl}_2$-triple. By~\eqref{component group SL} we have
\beqn
Z_{(K')^{\phi_\cO}}(\fa^{\phi_\cO})/Z_{(K')^{\phi_\cO}}(\fa^{\phi_\cO})^0=\bZ/{\check{d}_\cO}\bZ,\ \check{d}_\cO=\on{gcd}(2\on{gcd}(\mu_i,p_i=q_i),\on{gcd}(\mu_i,p_i\neq q_i)).
\eeqn
Thus, in view of~\eqref{identification}, for an IC sheaf of the form $\on{IC}(\widecheck\cO_\mu,-)$ to afford an action of $Z_{G'}^\theta$ via an order $m$ character, $m$ has to divide $\check{d}_\cO$. Since $m$ is odd, we conclude that $m|d_\mu$. By~\eqref{nilpsupport} if
  $\fF(\on{IC}(\cO_{m^l_+},\cE_m'))$ is an $\on{IC}$-sheaf on $\widecheck\cO_\mu$ then  $\cO_\mu\subset\bar\cO_{m^l_+}$. But for such a $\mu$ we can have $m|d_{\mu}$ only when $\cO_\mu=\cO$. Thus,  $\fF(\on{IC}(\cO_{m^l_+},\cE_m'))$ is supported on the closure of the dual of $\cO_{m^l_+}$. But the orbit $\cO_{m^l_+}$ is distinguished and hence self dual by~\cite[Theorem 5]{P}. 
  \end{proof}

Since Fourier transform commutes with parabolic induction, making use of Lemma~\ref{lemma-induction sl} and Lemma~\ref{biorbital-base}  we conclude that $\on{Supp}\fF(\on{IC}(\cO_\lambda,\cE_{\phi_m}))\subset K.(\bar\cO_L+(\Ln_P)_1)\subset\cN_1$. Thus~\eqref{sl-nilp} follows. 

Note that the Levi subgroup $L$ in Lemma~\ref{lemma-induction sl} is contained in a proper $\theta$-stable parabolic subgroup unless $\mu_1=1$ and thus $\cO_\lambda=\cO_{n_+}$ or $\cO_{n_-}$. From Lemma~\ref{lemma-induction sl} and central character considerations we further obtain:
\begin{corollary}\label{cusp-odd}
The nilpotent support character sheaf $\on{IC}(\cO,\cE_\phi)$ is cuspidal if and only if $\cO=\cO_{n_{\pm}}$, $n$ is odd, and $\phi$ is a primitive character of $A_K(\cO)=\bZ/n\bZ$.
\end{corollary}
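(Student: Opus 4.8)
The plan is to prove Corollary~\ref{cusp-odd} by combining Lemma~\ref{lemma-induction sl} with the central character bookkeeping already in place, handling the two directions separately. For the ``only if'' direction, suppose $\cO=\cO_\lambda$ with $\lambda$ not of the form $n_{\pm}$. Then in the notation set up before Lemma~\ref{lemma-induction sl} we have $\mu_1=\lambda_1>1$ (equivalently $j=\lambda_1\geq 2$), so the flag $0\subset V_1\subset\cdots\subset V_{\lambda_1}=V$ is nontrivial and the associated $\theta$-stable parabolic $P$ is proper, with proper $\theta$-stable Levi $L$. Lemma~\ref{lemma-induction sl} then exhibits $\on{IC}(\cO_\lambda,\cE_{\phi_m})$ as a summand (up to shift) of $\on{Ind}_{\fl_1\subset\fp_1}^{\Lg_1}\on{IC}(\cO_L,\cE_L)$ with $\on{IC}(\cO_L,\cE_L)\in\cA_{L^\theta}(\fl_1)$, so by definition $\on{IC}(\cO_\lambda,\cE_{\phi_m})$ is not cuspidal. (Strictly one wants the same conclusion for the corresponding \emph{character} sheaf $\fF\on{IC}(\cO_\lambda,\cE_{\phi_m})$; this follows since $\fF$ commutes with parabolic induction, as recalled just after Lemma~\ref{biorbital-base}.) This already shows cuspidality forces $\cO=\cO_{n_+}$ or $\cO_{n_-}$.

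Next I would rule out the even case and the non-primitive characters when $\cO=\cO_{n_\pm}$. If $n$ is even, then $\cO_{n_\pm}$ only occurs when $p=q=n/2$ (forced by the signature condition $\sum p_i[(\lambda_i+1)/2]+\cdots=p$ applied to the single-row diagram), and one checks directly that $\cO_{n_\pm}=\cO_{(n)_{\pm}}$ is an \emph{induced} orbit from the Levi $L=S(GL_{n/2}\times GL_{n/2})$ via the construction of Lemma~\ref{lemma-induction sl} with $\lambda_1=1$, $s=2$: here $l_1=2$, $k=(m-1)/2$ with $m=n$, and the flag has $V_1=V$ only — wait, this needs care, since for $\lambda_1=1$ the flag is trivial. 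Instead, for $n$ even the relevant point is that $\phi\in\widehat{A_K(\cO_{n_\pm})}_n=\widehat{(\bZ/n\bZ)}_n$ would be an order-$n$ (hence even-order) character, but by the remark after~\eqref{nom} (``$\cA_K(\Lg_1)_m\neq\emptyset$ for even $m$ only if $p=q$'') such sheaves do exist only when $p=q$; the genuine obstruction is that Lemma~\ref{lemma-induction sl} was proved under the hypothesis that $m$ is \emph{odd}, and for even $m$ one must instead invoke a separate induction from $L=S(GL_{n/2}\times GL_{n/2})$ — this is the case $\cO_\lambda=\cO_{(\frac n2)_+(\frac n2)_-}$ appearing in Theorem~0.1(3) as part of $\on{Char}_K^\rcu$, which is \emph{not} nilpotent-support, confirming that no \emph{nilpotent-support} cuspidal exists for even $n$. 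For the non-primitive characters: if $\phi\in\widehat{A_K(\cO_{n_\pm})}_m$ with $m\mid n$, $m<n$, then by Lemma~\ref{lemma-induction sl} applied with this $m$ (now $\lambda=(n/m)_\pm$ has $\lambda_1=n/m\geq 2$), $\on{IC}(\cO_{n_\pm},\cE_{\phi})$ is induced from a proper $\theta$-stable Levi, hence not cuspidal.

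For the ``if'' direction — $n$ odd, $\cO=\cO_{n_{\pm}}$, $\phi$ primitive of order $n$ — I would argue that $\on{IC}(\cO_{n_\pm},\cE_\phi)$ cannot be a summand of any parabolic induction from a proper $\theta$-stable Levi $L$. Here the central character is decisive: $Z(G)^\theta\cong\bZ/n\bZ$ acts on $\on{IC}(\cO_{n_\pm},\cE_\phi)$ through the primitive (order $n$) character $\phi$, via the surjection $Z(G)^\theta\twoheadrightarrow A_K(\cO_{n_\pm})=\bZ/n\bZ$. Parabolic induction preserves the central character (the $Z(G)^\theta$-action is transported intact through $\on{Ind}$), so any $L$ producing this sheaf must carry objects in $\cA_{L^\theta}(\fl_1)$ on which $Z(L)^\theta\supseteq Z(G)^\theta$ acts via a character of order $n$. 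But for a proper $\theta$-stable Levi $L=S(GL_{m_1}\times\cdots\times GL_{m_r})$ with $r\geq 2$, every nilpotent $L^\theta$-orbit $\cO_L$ has $A_{L^\theta}(\cO_L)$ a quotient of $\prod\bZ/\!\gcd(\cdots)\bZ$ with each factor of size dividing $m_i<n$, so no local system on $\cO_L$ admits an order-$n$ central character; hence the induction lands in central characters of order strictly less than $n$, contradiction. This is essentially the ``central character considerations'' invoked in the statement, and it should be spelled out using~\eqref{component group SL} and the compatibility of the maps $Z(G)^\theta\to Z(L)^\theta\to A_{L^\theta}(\cO_L)$.

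The main obstacle I anticipate is the careful handling of the even-$n$ / non-primitive boundary cases in the ``only if'' direction: Lemma~\ref{lemma-induction sl} is stated only for odd $m$, so to conclude non-cuspidality of $\on{IC}(\cO_{n_\pm},\cE_\phi)$ for even-order $\phi$ (possible only when $n$ is even, $p=q$) one genuinely needs the separate induction from $L=S(GL_{n/2}\times GL_{n/2})$, and one must check this $L$ sits in a \emph{proper} $\theta$-stable parabolic — which it does precisely because $\cO_{n_\pm}$ is an induced (not rigid) orbit in that setting. Making the chain ``$\fF$-side cuspidality $\Leftrightarrow$ nilpotent-orbital-side non-inducedness'' watertight, given that $\fF$ commutes with $\on{Ind}$, is routine once the orbit-side statement is established.
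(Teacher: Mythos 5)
Your overall strategy matches the paper's: the ``only if'' direction should follow from Lemma~\ref{lemma-induction sl} (plus a bit more), and the ``if'' direction from central character considerations. However, there is a genuine gap in the ``only if'' direction that you share with the paper's terse remark but do not fill.

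\textbf{The gap.} You assert that $\cO_\lambda\neq\cO_{n_\pm}$ forces $\lambda_1>1$ in the normalized notation of Lemma~\ref{lemma-induction sl}, so that the Levi $L$ is proper. This is false. Take $\lambda=m^{s}_{+}$ with $m$ odd, $s>1$, $n=ms$, $p=s(m+1)/2$, $q=s(m-1)/2$, and $\phi$ of order $m=d_\lambda$. This orbit lies in $\underline{\cN_1^0}$ (a single part length with a single sign), so $\on{IC}(\cO_{m^s_+},\cE_\phi)$ is a nilpotent support character sheaf by Theorem~\ref{biorbital-SL}. But in the lemma's normalization $\cO_\lambda=(m\cdot 1)_+\cdots(m\cdot 1)_+$, so $\lambda_1=\cdots=\lambda_s=1$, the flag in the construction collapses to $0\subset V_1=V$, and the Levi produced is $L=S(GL_{ml_1})=G$ itself. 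Lemma~\ref{lemma-induction sl} gives no information here, and your argument (like the one-line remark in the paper) does not show non-cuspidality. The missing ingredient is a separate induction: take $L=S(GL_m\times\cdots\times GL_m)$ ($s$ copies), the product orbit $\cO_{m_+}\times\cdots\times\cO_{m_+}$, and an order-$m$ local system on it (one checks $A_{L^\theta}$ of that product orbit is $\bZ/m\bZ$ via the determinant condition); then $\on{IC}(\cO_{m^s_+},\cE_\phi)$ is a summand of the parabolic induction, exactly as in the proof of Lemma~\ref{lemma-induction sl} but with a finer flag. Without this, the ``only if'' direction is incomplete.

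\textbf{Two smaller points.} (i) Your digression on even $n$ is unnecessary: by Theorem~\ref{biorbital-SL}, a \emph{nilpotent support} character sheaf $\on{IC}(\cO,\cE_\phi)$ necessarily has $\phi$ of odd order, so $\phi$ primitive on $A_K(\cO_{n_\pm})=\bZ/n\bZ$ forces $n$ odd automatically; there is nothing to rule out. (ii) In the ``if'' direction the idea is correct (parabolic induction preserves the $Z(G)^\theta$-central character, and no proper Levi can support order-$n$ central character), but the justification ``$A_{L^\theta}(\cO_L)$ is a quotient of $\prod\bZ/\gcd(\cdots)\bZ$ with each factor of size dividing $m_i<n$'' is not the right statement. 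What one actually needs is that the image of $Z(G)^\theta\cong\mu_n$ in $A_{L^\theta}(x)$ has order dividing $\gcd(m_1,\ldots,m_r)$: for each $i$, the one-parameter path $t\mapsto(e^{2\pi i t(n-m_i)/n}I_{m_i},e^{-2\pi i t m_i/n}I_{n-m_i})$ inside $Z_{L^\theta}(x)^0$ shows $\zeta^{m_i}\in Z_{L^\theta}(x)^0$, whence the image has order dividing $\gcd(m_1,\ldots,m_r)<n$ when $L$ is proper. With this correction the ``if'' direction is sound and matches the paper's ``central character considerations.''
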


 \section{Character sheaves}\label{sec-main theorem}
In this section we determine the set $\on{Char}_K(\Lg_1)$ of character sheaves. As in~\cite{VX}, we first define a set $\underline{\cN_1^{\text{cs}}}$ of nilpotent orbits such that for $\cO\in\underline{\cN_1^{\text{cs}}}$ the corresponding $\widecheck\cO$ are  supports of character sheaves.  The $\underline{\cN_1^{\text{cs}}}$ consists of the following orbits:
\begin{eqnarray*}
\cO_{m^l_+m^l_-\sqcup\mu}&&  \text{$m$ odd},\,\mu\in\on{SYD}^0_{p-ml,\,q-ml}\text{ with }m|d_\mu\text{ when $l>0$}\\
 \cO_{m^l_+m^l_-}&& 2ml= n,\,m\geq 1,\text{ (if $p=q$)}, \end{eqnarray*}
 where $\lambda\sqcup\mu$ denotes the signed Young diagram obtained by rearranging the rows of $\lambda$ and $\mu$ according to their lengths.  We also note that $\underline{\cN_1^{0}}\subset\underline{\cN_1^{\text{cs}}}$. 
 
 Using~\eqref{identification}, one readily checks that the equvariant fundamental groups $\pi_1^K(\widecheck\cO)$ are as follows
\beq
\label{explicit fundamental}
 \pi_1^{K}(\widecheck\cO_{m^l_+m^l_-\sqcup\mu})=B_{W_l}\times\bZ/\check{d}_{m,\mu}\bZ,\qquad \check d_{m,\mu}=\begin{cases}\on{gcd}(2m,d_\mu)\ \text{ if $l>0$, $\mu\neq\emptyset$}\\ d_\mu\ \text{ if $l=0$, $\mu\neq\emptyset$}\\2m\ \text{ if $\mu=\emptyset$}\end{cases}\eeq
 where $d_\mu$ is defined in~\eqref{component group SL} and $B_{W_0}=\{1\}$.

To describe character sheaves, 
we write down representations of the fundamental groups in~\eqref{explicit fundamental}. Let $\cP(n)$ denote the set of partitions of $n$, and $\cP_2(n)$ denote the set of bi-partitions of $n$, i.e., the set of pairs of partitions $(\mu,\nu)$ with $|\mu|+|\nu|=n$. 
Recall from~\cite[\S2.7]{VX}  the Hecke algebra $\calH_{W_n,1,-1}$  generated by $T_{s_i}$, $i=1,\ldots,n$, satisfying braid relations and the Hecke relations
$
T_{s_i}^2-1=0,\ i=1,\ldots,n-1, \  (T_{s_n}+1)^2=0.
$
Recall also that the simple  $\calH_{W_{n},1,-1}$-modules  are denoted by $L_\tau$, where $\tau\in\cP(n)$, and the simple $\bC[W_n]$-modules are denoted by $L_\rho$, where $\rho\in\cP_2(n)$.

 Let  $\tau\in\cP(k)$. We write $L_\tau$ for the $B_{W_k}$-representation obtained by  pulling back $L_\tau$   via the surjective map $\bC[B_{W_k}]\to \cH_{W_k,1,-1}$. For $\tau\in\cP(l)$ and $\psi_{m}\in(\widehat{\bZ/\check{d}_{m,\mu}\bZ})_m$, let $\cT_{\tau,\psi_{m}}$ denote the irreducible $K$-equivariant local system on $\widecheck\cO_{m^l_+m^l_-\sqcup\mu}$ corresponding to the irreducible representation $L_\tau\boxtimes \psi_m$ of $\pi_1^{K}(\widecheck\cO_{m^l_+m^l_-\sqcup\mu})$. 
 Similarly, for $\rho\in\cP_2(l)$ and each $\psi_{2m}\in(\widehat{\bZ/2m\bZ})_{2m}$, let $\cT_{\rho,\psi_{2m}}$ denote the irreducible $K$-equivariant local system on $\widecheck\cO_{m^l_+m^l_-}$ corresponding to the irreducible representation $L_\rho\boxtimes \psi_{2m}$ of $\pi_1^{K}(\widecheck\cO_{m^l_+m^l_-})$, where $B_{W_l}$ acts on $L_\rho$ through $W_l$.

 Let $\on{Char}_K(\Lg_1)_m=\fF(\cA_K(\Lg_1)_m)$ (see~\eqref{nom}).
\begin{thm}\label{char-sl} We have
\bern
\on{Char}_K(\Lg_1)_m&=&
\left\{\on{IC}(\widecheck\cO_{m^l_+m^l_-\sqcup\mu},\cT_{\tau,\psi_{m}})\,|\,\cO_{m^l_+m^l_-\sqcup\mu}\in\underline{\cN_1^{\text{cs}}},\,
\tau\in\cP(l), \text{ $\psi_m\in(\widehat{\bZ/\check d_{m,\mu}\bZ})_m$}\right\},\\&&\hspace{.5in}\text{ for $m$ odd},\text{ where $\check{d}_{m,\mu}$ is defined in~\eqref{explicit fundamental}};\\
\on{Char}_K(\Lg_1)_{2k}&=&\left\{\on{IC}(\widecheck\cO_{k^l_+k^l_-},\cT_{\rho,\psi_{2k}})\,|\,\rho\in\cP_2(l),\ \text{$\psi_{2k}\in(\widehat{\bZ/2k\bZ})_{2k}$}\right\}.
\eern

\end{thm}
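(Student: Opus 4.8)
The plan is to exploit that the Fourier transform gives a bijection $\fF\colon\cA_K(\Lg_1)_m\xrightarrow{\ \sim\ }\on{Char}_K(\Lg_1)_m$, so that the theorem is equivalent to identifying the right‑hand side of each displayed equality with $\on{Char}_K(\Lg_1)_m$. I would do this in two steps: \textbf{(A)} show that every $\on{IC}$ sheaf on the right is a character sheaf with central character of the prescribed order, giving the inclusion of the right‑hand side into $\on{Char}_K(\Lg_1)_m$; and \textbf{(B)} show that the right‑hand side and $\cA_K(\Lg_1)_m$ have the same (finite) cardinality, which together with (A) forces equality. Step (B) I would settle by exhibiting an explicit combinatorial bijection between $\cA_K(\Lg_1)_m$ and the right‑hand side — the ``bijection between nilpotent orbital complexes and character sheaves'' mentioned in the introduction.

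For (A), first the strata $\widecheck\cO_{m^l_+m^l_-\sqcup\mu}$ with $l=0$: by \eqref{explicit fundamental} (here $\check d_{m,\mu}=d_\mu$ and $\cP(0)=\{\emptyset\}$) these contribute exactly the sheaves $\on{IC}(\widecheck\cO_\mu,\cT_{\emptyset,\psi_m})$ with $\psi_m$ of order $m$ on $\bZ/d_\mu\bZ$, i.e. $m\mid d_\mu$; these are precisely the nilpotent‑support character sheaves already produced in Theorem~\ref{biorbital-SL}. For $l>0$ — and similarly for the even‑order family $\cO_{k^l_+k^l_-}$ — I would run the construction of Section~\ref{central} on $\cO=\cO_{m^l_+m^l_-\sqcup\mu}$. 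Choosing an $\mathfrak{sl}_2$‑triple $\phi$ for $\cO$, the datum $(G^\phi,K^\phi)$ is a smaller symmetric pair of the same type: the $l$ pairs of equal length‑$m$ (resp. length‑$k$) rows of opposite sign split off a pair of type $(SL_{2l},S(GL_l\times GL_l))$ with restricted Weyl group $W_{\fa^\phi}=W_l$, while the remaining rows and the $SL$‑centre contribute only the cyclic factor $Z_{K^\phi}(\fa^\phi)/Z_{K^\phi}(\fa^\phi)^0=\bZ/\check d_{m,\mu}\bZ$ (resp. $\bZ/2k\bZ$) of $\pi_1^K(\widecheck\cO)$ in \eqref{explicit fundamental}. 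Taking $\chi$ to be $\psi_m$ (resp. $\psi_{2k}$) on this factor and trivial on the braid quotient, Theorem~\ref{partial nearby} gives $\fF P_{\check\cO,\chi}=\on{IC}(\widecheck\cO,\cM_{\check\cO,\chi})$ with $\cM_{\check\cO,\chi}\cong\cM^\phi_\chi$ under the identification \eqref{identification}. Using the explicit formula for $\cM^\phi_\chi$ from \cite{GVX}, the base‑case computation of \cite{CVX}, and parabolic induction together with the central‑character bookkeeping of Lemmas~\ref{lemma-induction sl} and~\ref{biorbital-base}, the constituents of $\cM^\phi_\chi$ are indexed by the simple modules of a Hecke algebra attached to $W_l$: the degenerate algebra $\calH_{W_l,1,-1}$ (simples $L_\tau$, $\tau\in\cP(l)$, tensored with $\psi_m$) when $\chi$ is not faithful on the cyclic factor — the odd‑$m$ case — and the group algebra $\bC[W_l]$ (simples $L_\rho$, $\rho\in\cP_2(l)$, tensored with $\psi_{2k}$) when $\chi$ is faithful of order $2k$. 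Transporting through \eqref{identification}, each $\cT_{\tau,\psi_m}$ (resp. $\cT_{\rho,\psi_{2k}}$) occurs in $\cM_{\check\cO,\chi}$, so $\on{IC}(\widecheck\cO,\cT_{\tau,\psi_m})$ (resp. $\on{IC}(\widecheck\cO,\cT_{\rho,\psi_{2k}})$) is a direct summand of $\fF P_{\check\cO,\chi}$ and hence the Fourier transform of an irreducible summand of $P_{\check\cO,\chi}\in\on{Perv}_K(\cN_1)$, i.e. a character sheaf; its central character has order exactly $m$ (resp. $2k$) because $Z(G)^\theta=\bZ/n\bZ$ surjects onto the cyclic factor of $\pi_1^K(\widecheck\cO)$. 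To invoke Theorem~\ref{partial nearby} one must verify Assumption~\ref{ass}, which holds for the base pairs by the central‑character argument of Lemma~\ref{biorbital-base} and in general by parabolic induction.

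For (B): since $|(\widehat{\bZ/d\bZ})_m|$ equals Euler's totient $\varphi(m)$ when $m\mid d$ and $0$ otherwise, $|\cA_K(\Lg_1)_m|$ is $\varphi(m)$ times the number of signed Young diagrams of signature $(p,q)$ all of whose rows have length divisible by $m$. I would biject these diagrams with the pairs $(\cO_{m^l_+m^l_-\sqcup\mu},\tau)$, $\cO_{m^l_+m^l_-\sqcup\mu}\in\underline{\cN_1^{\text{cs}}}$ and $\tau\in\cP(l)$, by peeling off from a diagram a maximal collection of $l$ balanced length‑$m$ pairs $(m_+,m_-)$, recording the residual rows as $\mu\in\on{SYD}^0_{p-ml,q-ml}$ (which automatically satisfies the divisibility conditions defining $\underline{\cN_1^{\text{cs}}}$), and encoding the choices made in the peeling by a partition $\tau$ of $l$; one checks directly that this is a bijection, using that $\psi_m$ (resp. $\psi_{2k}$) ranges over exactly $\varphi(m)$ (resp. $\varphi(2k)$) characters for each such pair. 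With (A) and the bijectivity of $\fF$ this proves the theorem (and the cuspidal classification then drops out by matching the supports and central characters above against parabolic induction, cf. Corollary~\ref{cusp-odd}).

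\textbf{The main obstacle} is the delicate half of (A): verifying Assumption~\ref{ass} when $l>0$, and — above all — pinning down $\cM_{\check\cO,\chi}$ on the nose, i.e. tracking through the $\mathfrak{sl}_2$‑reduction and the identification \eqref{identification} exactly which $\calH_{W_l,1,-1}$‑module $L_\tau$ (odd case) or $\bC[W_l]$‑module $L_\rho$ (even case) occurs, and in particular why the central character being faithful on the cyclic factor replaces the degenerate Hecke algebra by the undeformed group algebra. The combinatorial identity in (B) is, by comparison, routine bookkeeping once the peeling operation is correctly set up.
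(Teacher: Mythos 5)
Your overall two‑step strategy (show the right‑hand side consists of character sheaves with the prescribed central character, then match cardinalities with $\cA_K(\Lg_1)_m$) is the same as the paper's, and your identification of the $l=0$ strata with Theorem~\ref{biorbital-SL}, your description of the structure of $(G^\phi,K^\phi)$, and your observation that primitivity of $\chi$ on the cyclic factor governs whether one gets $\bC[W_l]$ or $\calH_{W_l,1,-1}$ are all in the right spirit. But there is a genuine gap in step (A), and it sits exactly where you flag the ``main obstacle.''

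You propose to run the nearby cycle construction of Section~\ref{central} directly on \emph{every} orbit $\cO_{m^l_+m^l_-\sqcup\mu}\in\underline{\cN_1^{\text{cs}}}$, including $\mu\neq\emptyset$, and then dispose of Assumption~\ref{ass} with the sentence that it ``holds for the base pairs by the central‑character argument of Lemma~\ref{biorbital-base} and in general by parabolic induction.'' This is not a working argument. Assumption~\ref{ass} is a statement about irreducibility of the characteristic variety of $\operatorname{IC}(X_{a+n},\cL_\chi)$, and parabolic induction does not preserve or transport such irreducibility in any obvious way — it alters the singular support. Lemma~\ref{biorbital-base} is about the support of a Fourier transform, not about characteristic varieties. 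The paper's actual verification of Assumption~\ref{ass} (the lemma in \S6.1) is a separate central‑character argument carried out \emph{only} for $\cO_{m^l_+m^l_-}$ inside the balanced pair $(SL_{2ml},S(GL_{ml}\times GL_{ml}))$: there one can list the potential extra components $T^*_{K\cdot(a+e')}\Lg_1$, compute $Z_K(a,a',e')/Z_K(a,a',e')^0\cong\bZ/2d_{\lambda_{e'}}\bZ$ explicitly, and show $2d_{\lambda_{e'}}<2m$ forces the microlocal stalks to vanish. Once $\mu\neq\emptyset$ that calculation becomes substantially more complicated (more orbits in $\overline{K.n}\setminus K.n$ commuting with $a$, and messier component groups), and there is no reason to expect the same clean numerical obstruction. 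The paper deliberately sidesteps this: it applies the nearby cycle theorem \emph{only} to the balanced case where Assumption~\ref{ass} is provable, and then produces all the remaining $\on{IC}(\widecheck\cO_{m^l_+m^l_-\sqcup\mu},\cT_{\tau,\psi_m})$ by parabolic induction of character sheaves from $\theta$-stable Levis of type $S(GL_{2ml}\times GL_{n-2ml})$ or products of $SL_{2m}$‑blocks (\S6.2, claims~\eqref{claim1-type A}--\eqref{claim3-type A}), so that no irreducibility assumption is ever needed outside the balanced case. You should replace the general appeal to Theorem~\ref{partial nearby} by this induction step; as written, that part of your (A) does not go through.

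A secondary, more cosmetic, point: your description of the bijection in (B) as ``peeling off $l$ balanced length‑$m$ pairs'' is imprecise. In the paper's bijection the input orbit $\cO_\lambda$ has rows of lengths $md_i$ (multiples of $m$, not all equal to $m$); one sets $l_i=\min(p_i,q_i)$, $l=\sum d_il_i$, $\tau=(d_1^{l_1}\cdots d_s^{l_s})\in\cP(l)$, and $\mu$ the unbalanced residue. The length‑$m$ rows appear only in the \emph{target} stratum $\widecheck\cO_{m^l_+m^l_-\sqcup\mu}$. The cardinality count $|\widehat{A_K(\cO_\lambda)}_m|=|(\widehat{\bZ/\check d_{m,\mu}\bZ})_m|=\varphi(m)$ then closes the argument. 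This is repairable with no new ideas, unlike the issue above.
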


\begin{corollary}\label{coro-cuspidal}
The cuspidal character sheaves exist only when the pair $(G,K)$ is quasi-split, i.e., when $|p-q|\leq 1$.
Moreover, 
\begin{enumerate}
\item when $|p-q|=1$, let $\epsilon=+$ (resp. $-$) if $p-q=1$ (resp. $-1$). Then
\beqn
 \on{Char}_K^{\rm{cusp}}(\Lg_1)=\left\{\on{IC}(\cO_{n_{\epsilon}},\cE_{\phi_{n}})\,|\,\phi_{n}\in(\widehat{\bZ/n\bZ})_n\right\}.
 \eeqn
 \item
When $p=q$,
\bern
 \on{Char}^{\rm{cusp}}_K(\Lg_1)&=&\left\{\on{IC}(\widecheck\cO_{(\frac{n}{2})_+(\frac{n}{2})_-},\cT_{\rho,\psi_{n}})\,|\,\rho\in\cP_2(1),\ \psi_{n}\in(\widehat{\bZ/n\bZ})_n\right\}\\
 &\bigcup&(\text{if $\frac{n}{2}$ is odd}\,)\left\{\on{IC}(\widecheck\cO_{(\frac{n}{2})_+(\frac{n}{2})_-},\cT_{\tau,\psi_{\frac{n}{2}}})\,|\,\tau\in\cP(1),\psi_{\frac{n}{2}}\in(\widehat{\bZ/n\bZ})_{n/2}\right\}\\&=&\left\{\fF\on{IC}(\cO^\omega_{reg},\cE_{\phi_n})\mid\omega={\rm I,II},\,\phi_{n}\in(\widehat{\bZ/n\bZ})_{n}\right\}\\
&\bigcup&(\text{if $\frac{n}{2}$ is odd}\,)\left\{\fF\on{IC}(\cO_{(\frac{n}{2})_+(\frac{n}{2})_-},\cE_{\phi_{\frac{n}{2}}})\mid\phi_{\frac{n}{2}}\in(\widehat{\bZ/\frac{n}{2}\bZ})_{\frac{n}{2}}\right\}.
 \eern

 \end{enumerate}
 
 \end{corollary}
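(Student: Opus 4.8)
The plan is to read off the cuspidal character sheaves from the complete classification of Theorem~\ref{char-sl}, deciding cuspidality family by family via parabolic induction. Since the Fourier transform $\fF$ commutes (up to shift) with parabolic induction and intertwines the induction functors on $\cA_K(\Lg_1)$ and on $\Char_K(\Lg_1)$, a character sheaf $\fF\,\on{IC}(\cO_\lambda,\cE_\phi)$ is cuspidal precisely when the nilpotent orbital complex $\on{IC}(\cO_\lambda,\cE_\phi)$ is cuspidal as a nilpotent orbital complex. Using the bijection $\cA_K(\Lg_1)_m\leftrightarrow\Char_K(\Lg_1)_m$ of Theorem~\ref{char-sl}, it is therefore enough to determine the cuspidal nilpotent orbital complexes and then transport the answer back through $\fF$.

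First I would dispose of the non-cuspidal complexes. By Lemma~\ref{lemma-induction sl} (and its analogue for the orbits in $\underline{\cN_1^{\text{cs}}}\setminus\underline{\cN_1^0}$, obtained by the same peeling-off argument), whenever $\cO_\lambda$ is not a single Jordan block and is not of the ``distinguished'' type $\cO_{m^l_+m^l_-}$ with $2ml=n$, the complex $\on{IC}(\cO_\lambda,\cE_\phi)$ occurs as a direct summand of parabolic induction from a proper $\theta$-stable Levi. Concretely, if $\mu\neq\emptyset$ one splits off $\mu$ through $L\cong S(GL_{2ml}\times GL_{n-2ml})$, and if $\cO_\lambda=\cO_{m^l_+m^l_-}$ with $l\geq 2$ one splits off a single $(m_+,m_-)$-pair through $L\cong S(GL_{2m(l-1)}\times GL_{2m})$, matching the decomposition $\pi_1^K(\widecheck\cO)\supset B_{W_{l-1}}\times B_{W_1}\times\bZ/\check{d}_{m,\emptyset}\bZ$ of~\eqref{explicit fundamental} against the branching rules for the $\calH_{W_l,1,-1}$-modules $L_\tau$ (resp.\ the $W_l$-modules $L_\rho$), which show those modules are proper constituents of the corresponding induced modules once $l\geq 2$. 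Moreover Corollary~\ref{cusp-odd} already records that the only nilpotent-support cuspidal character sheaves are the $\on{IC}(\cO_{n_\pm},\cE_\phi)$ with $n$ odd and $\phi$ primitive. After these eliminations the only surviving candidates are $\cO_\lambda=\cO_{n_\pm}$ (possible only when $|p-q|\leq 1$) and, when $p=q$, $\cO_\lambda=\cO_{(n/2)_+(n/2)_-}$.

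It then remains to show the candidates are cuspidal and to pin down which local systems survive. For $\on{IC}(\cO_{n_\pm},\cE_\phi)$ the obstruction is on the component group: for any proper $\theta$-stable Levi $L$ and nilpotent $\cO_L\subset\cN_{\fl_1}$ inducing to $\cO_{n_\pm}$, the image of $Z(G)^\theta\cong\bZ/n\bZ$ in $A_{L^\theta}(\cO_L)$ has order a proper divisor of $n$ (splitting off a $GL$-block shrinks the relevant $\gcd$), so no induced complex carries a primitive local system, while the non-primitive ones were eliminated above. Since $\cO_{n_\pm}\subset\cN_1$ forces $|p-q|\leq 1$, and $|p-q|=1$ forces $n$ odd with the block's sign $\epsilon=\operatorname{sgn}(p-q)$, the relevant sheaves are biorbital and Corollary~\ref{cusp-odd} delivers part (1). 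When $p=q$ these candidates become the two regular orbits $\cO^{\mathrm I}_{reg}=\cO_{n_+}$ and $\cO^{\mathrm{II}}_{reg}=\cO_{n_-}$, and by Theorem~\ref{char-sl} $\fF$ carries $\on{IC}(\cO^\omega_{reg},\cE_{\phi_n})$ to $\on{IC}(\widecheck\cO_{(n/2)_+(n/2)_-},\cT_{\rho,\psi_n})$ (with $\omega\leftrightarrow\rho\in\cP_2(1)$ and $\phi_n\leftrightarrow\psi_n$), giving the first family of part (2). For $\on{IC}(\cO_{(n/2)_+(n/2)_-},\cE_\phi)$ the same component-group analysis restricts cuspidality to $\phi$ primitive of order $n/2$; that it genuinely occurs, and only when $n/2$ is odd, I would establish using the enhanced nearby cycle construction of Theorem~\ref{partial nearby} to realise $\on{IC}(\widecheck\cO_{(n/2)_+(n/2)_-},\cT_{\tau,\psi_{n/2}})$ as a character sheaf with support exactly $\overline{\widecheck\cO_{(n/2)_+(n/2)_-}}$, the self-duality of the distinguished orbit $\cO_{(n/2)_+(n/2)_-}$ for the sub-pair attached to $S(GL_{n/2}\times GL_{n/2})$ (as in Lemma~\ref{biorbital-base}, via~\cite[Theorem 5]{P}), and a rank count ruling out any realization by induction from a proper $\theta$-stable Levi; the oddness of $n/2$ enters through $\check{d}_{m,\emptyset}=2m=n$ in~\eqref{explicit fundamental} and the requirement ``$m$ odd'' in the first clause of Theorem~\ref{char-sl}. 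Transporting back through $\fF$ yields the second family of part (2), and collecting the cases $|p-q|\geq 2$, $|p-q|=1$, $p=q$ gives the ``only if'' (quasi-split) assertion and the two equivalent descriptions in part (2).

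The step I expect to be the main obstacle is the last one: showing that $\on{IC}(\cO_{(n/2)_+(n/2)_-},\cE_\phi)$ with $\phi$ primitive is not a summand of \emph{any} parabolic induction from a proper $\theta$-stable Levi. A crude central-character count does not suffice there, since the central character of such a $\phi$ does lift along $Z(G)^\theta\to A_{L^\theta}(\cO_L)$ for $L\cong S(GL_{n/2}\times GL_{n/2})$; one genuinely needs the finer structure of the induced complex over the open orbit, the distinguishedness/self-duality of $\cO_{(n/2)_+(n/2)_-}$, and the Hecke-algebra module bookkeeping to see that the primitive local system never actually appears.
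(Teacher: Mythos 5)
Your framework is the right one and matches the paper's overall strategy: read off candidates from Theorem~\ref{char-sl}, eliminate via the parabolic induction statements~\eqref{claim1-type A}--\eqref{claim3-type A}, and pin down the survivors using Corollary~\ref{cusp-odd} and central character considerations. Your handling of the $\cO_{n_\pm}$ case and the first family in part (2) is essentially the paper's (the paper phrases it more economically as $\Char_{L^\theta}(\Ll_1)_n=\emptyset$ for every proper $\theta$-stable Levi $L$, so everything in $\Char_K(\Lg_1)_n$ is automatically cuspidal), and your transport across $\fF$ is correct.

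However, you flag but do not close the gap you correctly identify as the main obstacle: showing that $\on{IC}(\widecheck\cO_{(\frac{n}{2})_+(\frac{n}{2})_-},\cT_{\tau,\psi_{\frac{n}{2}}})$, equivalently $\fF\on{IC}(\cO_{(\frac{n}{2})_+(\frac{n}{2})_-},\cE_\phi)$ with $\phi$ primitive of odd order $n/2$, is not a summand of any parabolic induction. You propose to attack this via distinguishedness/self-duality of the orbit plus Hecke-algebra branching and a rank count, none of which you actually carry out, and this route is not the one the paper takes. The paper closes the gap with a short support argument that makes the Hecke-algebra bookkeeping unnecessary: the only proper $\theta$-stable Levi $L$ with $\Char_{L^\theta}(\Ll_1)_{n/2}\neq\emptyset$ is $L\cong S(GL_{n/2}\times GL_{n/2})$; for such $L$, part (1) (applied to the factors, which are quasi-split with $|p_i-q_i|=1$ since $n/2$ is odd) together with the nonexistence of even smaller Levis supporting the character forces every sheaf in $\Char_{L^\theta}(\Ll_1)_{n/2}$ to have nilpotent support; parabolic induction preserves nilpotent support; but the target sheaves are supported on $\overline{\widecheck\cO_{(\frac{n}{2})_+(\frac{n}{2})_-}}$, which is strictly larger than the nilpotent cone. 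This is the key step your proposal is missing, and as written your proof would not compile into a complete argument for part (2) without supplying it or an equivalent.
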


 We prove these results in the next section.

\section{Proof of Theorem~\ref{char-sl}}\label{sec-sl}

\subsection{The symmetric pair \texorpdfstring{$(SL_{2ml},S(GL_{ml}\times GL_{ml}))$}{Lg}}We start with the geometric construction from \S\ref{central}. We consider the symmetric pair $(G,K)=(SL_{2ml},S(GL_{ml}\times GL_{ml}))$. The center $Z(G)=\mu_{2ml}=\bZ/2ml\bZ$,\footnote{Although it is more natural to think of the center as a multiplicative group we use the additive notation not to confuse it with partitions $\mu$.} and as the involution  is inner, 
$Z(G)^\theta=Z(G)=\bZ/2ml\bZ$.

Consider the nilpotent orbit $\cO_{m^l_+m^l_-}$  and  the corresponding $\widecheck\cO_{m^l_+m^l_-}$. In this case $G^\phi=\{g\in GL_{2l}\mid \det(g)^m=1\}$ and $K^\phi=\{(h_+,h_-)\in GL_l\times GL_l\mid  \det(h_+h_-)^m=1\}$. We observe that $Z(G^\phi)=Z(G)=\bZ/2ml\bZ$, that $G^\phi/(G^\phi)^0= \bZ/m\bZ=K^\phi/(K^\phi)^0$, and that 
\beqn
Z_{K^\phi}(\fa^\phi)/Z_{K^\phi}(\fa^\phi)^0 =\bZ/2m\bZ\,.
\eeqn
\begin{lem}
Let $\chi$ be an irreducible character of 
\beqn
Z_K(\fa^\phi+e)/Z_K(\fa^\phi+e)^0=Z_{K^\phi}(\fa^\phi)/Z_{K^\phi}(\fa^\phi)^0 = \bZ/2m\bZ.
\eeqn The characteristic variety of $\operatorname{IC}(X_{a+e}, \cL_\chi)$ is irreducible, i.e., the Assumption~\ref{ass} holds, if the character $\chi$ is primitive or if $m$ is odd and $\chi$ is of order $m$.
\end{lem}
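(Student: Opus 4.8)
The plan is to reduce the irreducibility of the characteristic variety of $\operatorname{IC}(X_{a+e}, \cL_\chi)$ to a statement about the local system $\cL_\chi$ on the open orbit $X_{a+e}$, namely that no proper closed $K$-orbit in $\bar X_{a+e}$ contributes to $\SS(\operatorname{IC}(X_{a+e}, \cL_\chi))$. Since $X_{a+e} = K\cdot(a+e)$ is the fiber of $\check f$ over $\bar a\in(\La^\phi)^{rs}$, and $a+e$ is a regular semisimple element of the new symmetric pair attached to $L = Z_G(\fa^\phi)$ shifted by the distinguished nilpotent $e$, the closure $\bar X_{a+e}$ stratifies by the orbits $K\cdot(a' + n')$ where $a'$ ranges over degenerations of $a$ in $\bar{\fa^\phi}$ and $n'$ over nilpotents commuting with $a'$ in the appropriate Levi. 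The characteristic variety $\SS(\operatorname{IC}(X_{a+e}, \cL_\chi))$ is contained in the union of the conormals $\overline{T^*_{K\cdot(a'+n')}\Lg_1}$, and it contains a given conormal precisely when the vanishing cycle of $\cL_\chi$ (equivalently, the microlocal stalk) along that stratum is nonzero. Thus I need to show: under the stated hypotheses on $\chi$, the only stratum with nonzero contribution is the open one $X_{a+e}$ itself.

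First I would describe the boundary strata of $\bar X_{a+e}$ concretely in terms of the $\fs\fl_2$-data and the explicit group $K^\phi = \{(h_+,h_-)\in GL_l\times GL_l \mid \det(h_+h_-)^m = 1\}$, using the identification~\eqref{identification} to transport the question to the datum $(G^\phi, K^\phi, \chi)$; by the reduction already carried out in Section~\ref{central} (the computation of critical points in the Lemma and the remark following it), the Picard–Lefschetz/microlocal analysis of $\operatorname{IC}(X_{a+e},\cL_\chi)$ along its boundary matches that of $\operatorname{IC}(K^\phi\cdot a, \cL_\chi)$ in $\Lg_1^\phi$. So it suffices to check Assumption~\ref{ass} for the regular semisimple orbit $K^\phi\cdot a$ in the small symmetric pair $(\Lg^\phi, K^\phi)$ with the rank-one (type $B_1/C_1$, i.e. $W_{\La^\phi} = W_l$ but here the relevant factor is a single $\bZ/2m\bZ$) local system $\cL_\chi$. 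Second, I would compute the relevant monodromy/vanishing-cycle condition: for $(SL_{2m}, S(GL_m\times GL_m))$-type rank-one data, the closure of a regular semisimple orbit acquires exactly one extra nilpotent orbit in its boundary, and $\operatorname{IC}$ of $\cL_\chi$ extends with full support (i.e. the characteristic variety picks up the extra conormal) if and only if the monodromy of $\cL_\chi$ around that boundary divisor is trivial. The monodromy is governed by the image of a loop in $\pi_1^{K^\phi}((\Lg_1^\phi)^{rs}) = B_{W_l}$ mapping, under the central quotient $\bZ/2m\bZ \to \bZ/m\bZ$, to a generator; a short computation shows this monodromy is trivial exactly when $\chi$ factors through $\bZ/m\bZ$ but is nontrivial there — and it is nontrivial (so the boundary does \emph{not} contribute) precisely when $\chi$ is primitive of order $2m$, or when $m$ is odd and $\chi$ has order $m$ but the relevant loop maps to an element of $\bZ/m\bZ$ of even... — here the oddness of $m$ forces the order-$m$ character to be nontrivial on that loop as well.

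The main obstacle I expect is the second step: pinning down exactly which loop in $\pi_1^{K^\phi}$ controls the monodromy along the single boundary divisor of $\bar X_{a+e}$, and matching the arithmetic condition ``$\chi(\gamma)\ne 1$'' with the stated dichotomy ``$\chi$ primitive, or $m$ odd and $\chi$ of order $m$''. Concretely, one must track the composite $\bZ/2m\bZ \twoheadrightarrow \bZ/m\bZ = K^\phi/(K^\phi)^0$ together with the embedding of the local $\pi_1$ of the boundary divisor, and verify that the image of the vanishing-cycle loop is a generator of $\bZ/2m\bZ$ in the primitive case and a generator of $\bZ/m\bZ$ in the order-$m$ case — which is where $m$ odd is used, since for $m$ even an order-$m$ character can be trivial on the square of a generator. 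Once this is established, the statement follows: in the remaining cases (for instance $m$ even, $\chi$ of order $m$, or $\chi$ of order dividing $m$ and small) the monodromy vanishes, the boundary conormal is present, and the characteristic variety is reducible, consistent with Assumption~\ref{ass} failing there. I would close by noting that the same argument, via~\eqref{identification} and the stratification of $\widecheck\cO_{m^l_+m^l_-}$, gives the result for general $l$ with $B_{W_l}$ in place of the rank-one group, since the extra braid-group directions carry the pulled-back $L_\tau$ or $L_\rho$ whose characteristic varieties are already known to be irreducible from~\cite{GVX,VX}.
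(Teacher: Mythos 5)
Your proposal misidentifies the boundary stratification of $\bar X_{a+e}$, and the argument you base on that picture does not go through. The semisimple part of any element in the closure of $K\cdot(a+e)$ is $K$-conjugate to $a$ (the semisimple part $a$ determines a closed $K$-orbit, and limits preserve it), so the boundary orbits have the form $K\cdot(a+e')$ with $a$ \emph{fixed} and $e'$ a degeneration of $e$ inside $\overline{K\cdot e}\setminus K\cdot e$, commuting with $a$. You instead describe the boundary as $K\cdot(a'+n')$ with $a'$ degenerating in $\bar{\fa^\phi}$; that degeneration never happens here. For the same reason, your reduction to the small symmetric pair $(\Lg^\phi,K^\phi)$ via~\eqref{identification} is not applicable at the boundary: each boundary stratum $K\cdot(a+e')$ carries its own $\fs\fl_2$-datum $\phi_{e'}\neq\phi$, so a single identification with $K^\phi\cdot a$ sees none of them. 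The claim that there is ``exactly one extra nilpotent orbit in the boundary'' is also false in general.

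The paper's proof takes a different and more robust route that you should compare with. It never mentions monodromy around a divisor; instead it exploits the central character. For each boundary orbit $K\cdot(a+e')$ one computes the generic conormal direction $(a+e',a'+e')$ and the component group $Z_K(a,a',e')/Z_K(a,a',e')^0\cong\bZ/2d_{\lambda_{e'}}\bZ$ (here $\lambda_{e'}<m^{2l}$ forces $d_{\lambda_{e'}}<m$). The center $Z(G)$ acts on any microlocal stalk of $\operatorname{IC}(X_{a+e},\cL_\chi)$ through the character induced by $\chi$, which has order $2m$ or order $m$ with $m$ odd; but on a boundary conormal the center factors through $\bZ/2d_{\lambda_{e'}}\bZ$, and neither $2m\mid 2d_{\lambda_{e'}}$ nor ($m$ odd and $m\mid 2d_{\lambda_{e'}}$) can hold when $d_{\lambda_{e'}}<m$. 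Hence every boundary microlocal stalk vanishes and the characteristic variety is the single conormal bundle. This divisibility argument is where ``$\chi$ primitive, or $m$ odd of order $m$'' actually enters — not through an incompletely specified loop computation. Your write-up trails off precisely at the step where this arithmetic should appear, which is the crux of the lemma.
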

\begin{proof}
The possible irreducible components of the characteristic variety, other than the conormal bundle of $X_{a+e}$,  consist of conormal bundles of orbits of the form $K\cdot (a+e')$ where $e'$ commutes with $a$ and $e'\in \overline{K.e}\backslash K.e$. We have
\beqn
\begin{gathered}
T^*_{K\cdot (a+e')}\Lg_1 \ = \ \{(x,y)\in \Lg_1\times\Lg_1\mid x\in K\cdot (a+e')\ \ \text{and} \ \  [x,y]=0\}=
\\
= K\cdot\{(a+e',y)\mid [a+e',y]=0\}\,.
\end{gathered}
\eeqn
Let $(a+e',a'+e')$ be a generic vector in $T^*_{K\cdot (a+e')}\Lg_1$, where $a'\in(\fa^{\phi_{e'}})^{rs}\subset\fa$ and $\phi_{e'}$ is defined for $e'$ in the same way as $\phi$ for $e$. By~\eqref{component group SL}  $Z_K(e')/Z_K(e')^0=\bZ/d_{\lambda_{e'}}\bZ $ where $\lambda_{e'}$ is the partition corresponding to $e'$. 
We have that
\beqn
Z_K(a,a',e')/Z_K(a,a',e')^0=\bZ/2d_{\lambda_{e'}}\bZ,
\eeqn
where $Z_K(a,a',e')=Z_K(a)\cap Z_K(a')\cap Z_K(e')$.  This follows by observing that $e'\in Z_{\Lg_1}(a)$ and that  the action of $Z_K(a)$ on $Z_{\Lg_1}(a)$ can be identified with the diagonal action of $\{(g_1,\ldots,g_l)\in GL(m)\times\cdots\times GL(m)\,|\,\on{det}(g_1\cdots g_l)^2=1\}$ on the product of $l$-copies of $\mathfrak{gl}(m)$. 

Recall that  $\pi_1^{K}(\widecheck\cO_{m^l_+m^l_-})=B_{W_l}\times\bZ/2m\bZ$. The center $Z(G)$ acts on the $K$-equivariant local systems on $\widecheck\cO_{m^l_+m^l_-}$ via $Z(G) \to \pi_1^{K}(\widecheck\cO_{m^l_+m^l_-})$ which lands in the second factor via the surjective map $Z(G) \to Z_K(\fa^\phi+e)/Z_K(\fa^\phi+e)^0= \bZ/{2m}\bZ$. Thus the $\operatorname{IC}(\widecheck \cO_{m^l_+m^l_-}, \cL_\chi)$ has central character induced by $\chi$ and so it is either primitive of order $2m$, or of order $m$ and $m$ is odd. This implies that $K$ acts on the microlocal stalks also by such a central character. The action of the center on the microlocal stalk at $(a+e',a'+e')$ is via the map
\beqn
Z(G)\to Z_K(a,a',e')/Z_K(a,a',e')^0=\bZ/2d_{\lambda_{e'}}\bZ\,.
\eeqn
However, as $\lambda_{e'}<m^{2l}$ we have $d_{\lambda_{e'}}<m$ and this forces the microlocal stalk at $(a+e',a'+e')$  to be zero and hence the characteristic variety of $\operatorname{IC}(X_{a+e}, \cL_\chi)$ is irreducible.
\end{proof}

We can now apply Theorem~\ref{partial nearby} and obtain
\beq
\label{sl-nearby}
\begin{array}{ll}
\fF P_{\widecheck\cO_{m^l_+m^l_-},\chi}=\on{IC}(\widecheck\cO_{m^l_+m^l_-},\cH_{W_l,1,1}\otimes\chi)&\text{ if $\chi$ is primitive of order $2m$};
\\
\fF P_{\widecheck\cO_{m^l_+m^l_-},\chi}=\on{IC}(\widecheck\cO_{m^l_+m^l_-},\cH_{W_l,1,-1}\otimes\chi)&\text{ if $\chi$ is of order $m$ and $m$ is odd}.
\end{array}
\eeq
It then follows that 
\beqn
\text{$\on{IC}(\widecheck\cO_{m^l_+m^l_-},\cT_{\rho,\psi_{2m}})\in\on{Char}_K(\Lg_1)$, $\rho\in\cP_2(l)$ and $\psi_{2m}\in(\widehat{\bZ/2m\bZ})_{2m}$}
\eeqn
\beqn
\text{and when $m$ is odd, $\on{IC}(\widecheck\cO_{m^l_+m^l_-},\cT_{\tau,\psi_{m}})\in\on{Char}_K(\Lg_1)$, $\tau\in\cP(l)$ and $\psi_{m}\in(\widehat{\bZ/2m\bZ})_{m}$.}
\eeqn

\subsection{Induced complexes}In this subsection we show that the sheaves
\ber
\label{ind1}&&\on{IC}(\widecheck\cO_{m^{l}_+m^l_-},\cT_{\tau,\psi_m}),\text{ $m$ odd, }l=n/2m>1\\
\label{ind2}&&\on{IC}(\widecheck\cO_{m^{l}_+m^l_-},\cT_{\rho,\psi_{2m}}),\,l=n/2m>1\\
\label{ind3}&&\on{IC}(\widecheck\cO_{m^l_+m^l_-\sqcup\mu},\cT_{\tau,\psi_m}),\text{ $m$ odd, }\ l>0,\mu\neq\emptyset
\eer
can be obtained via parabolic induction from character sheaves in $\on{Char}(\Ll_1,L^{\theta})$ for an appropriately chosen $\theta$-stable Levi  subgroup contained in a proper $\theta$-stable parabolic subgroup. It then follows that 
\beqn\on{IC}(\widecheck\cO_{m^l_+m^l_-\sqcup\mu},\cT_{\tau,\psi_m})\in\on{Char}_K(\Lg_1),
\eeqn
where $\cO_{m^l_+m^l_-\sqcup\mu}\in\underline{\cN_1^{\text{cs}}}$, $m$ is odd and $\mu\neq\emptyset$.

We consider first the sheaves in~\eqref{ind1} and~\eqref{ind2}. Note that $p=q$. Let $P$ be the $\theta$-stable parabolic subgroup of $G$ which stabilizes the flag  $0\subset V_{2m}\subset V_{4m}\subset\cdots\subset V_{2(l-1)m}\subset V
,$
 where
$
 V_{2km}=\on{span}\{e_1,\ldots,e_{km}\}\oplus \on{span}\{f_1,\ldots,f_{km}\},\,k=1,\ldots,l-1.
$
Let $$V_+^k=\on{span}\{e_{(k-1)m+j}\}_{j=1,\ldots,m},\,V_-^k=\on{span}\{f_{(k-1)m+j}\}_{j=1,\ldots,m},\ V^k=V^k_+\oplus V_-^k,\,1\leq k\leq l.$$
 Let $L=S(GL_{V^1}\times\cdots\times GL_{V^l})$ be the natural $\theta$-stable Levi subgroup of   $P$.  We have 
\bern
&&L^\theta\cong S(GL_{V^1_+}\times GL_{V^1_-}\times\cdots\times GL_{V^l_+}\times GL_{V^l_-})\\&&\Ll_1\cong\bigoplus_{i=1}^l(\on{Hom}(V^i_+,V^i_-)\oplus\on{Hom}(V^i_-,V^i_+)).
\eern
Let $\cO^{\Ll_1}_{(m_+m_-)^{l}}$ denote the nilpotent $L^\theta$-orbit in $\Ll_1$ consisting of elements $x$ such that $x|_{V^i_+\oplus V^i_-}$ belongs to the orbit $\cO_{m_+m_-}$ under the action of $GL_{V^i_+}\times GL_{V^i_-}$. Consider the corresponding dual stratum $\widecheck\cO^{\Ll_1}_{(m_+m_-)^{ l}}$ in $\Ll_1$. We have
\beqn
\pi_1^{L^\theta}(\widecheck\cO^{\Ll_1}_{(m_+m_-)^{l}})=\underbrace{B_{W_1}\times\cdots\times B_{W_1}}_{l\text{ terms }}\times \bZ/ 2m\bZ= B_{W_1}^l\times \bZ/ 2m\bZ,
\eeqn
where $B_{W_1}=B_{S_2}\cong\bZ$. 
Note that the image of $K\times^{P_K}\left(\overline{\widecheck\cO^{\Ll_1}_{(m_+m_-)^{l}}}+(\Ln_P)_1\right)$ under the map $\pi:K\times^{P_K}\Lp_1\to K.\Lp_1$ is the closure of the stratum $\widecheck \cO_{m^l_+m^l_-}$. This can be seen as follows. It is clear that $\widecheck \cO_{m^l_+m^l_-}$ is contained in the image. One checks readily that $\on{dim}K/P_K=\on{dim}(\Ln_P)_1=m^2(l^2-l)$, $\on{dim}\overline{\widecheck\cO^{\Ll_1}_{(m_+m_-)^{ l}}}=l(2m^2-m+1)$ and $\dim \widecheck\cO_{m^l_+m^l_-}=2m^2l^2-ml+l$. Thus $\dim K\times^{P_K}\left(\overline{\widecheck\cO^{\Ll_1}_{(m_+m_-)^{ l}}}+(\Ln_P)_1\right)=\dim \widecheck\cO_{m^l_+m^l_-}$. Let us  write the restriction of $\pi$ as
\beqn
\pi_0:K\times^{P_K}\left(\overline{\widecheck\cO^{\Ll_1}_{(m_+m_-)^{ l}}}+(\Ln_P)_1\right)\to\bar{\widecheck \cO}_{m^l_+m^l_-}. 
\eeqn
Let $x\in{\widecheck \cO}_{m^l_+m^l_-}$. Then there exists a basis $\{e_i^j,f_i^j,\,1\leq i\leq m,\,1\leq j\leq l\}$ such that $xe^j_i=f_{m-i}^j+a_jf^j_{m-i+1},\ xf_i^j=a_je^j_{m-i+1}+e^j_{m-i+2}$, $a_j\in\mathbb{C}$, where $e_i^j\in V^+$, $f_i^j\in V^-$, and by definition $f^j_0=e^j_{m+1}=0$. Let $W^j_+:=\on{span}\{e_i^j\}_{1\leq i\leq m}$, $W^j_-:=\on{span}\{f_i^j\}_{1\leq i\leq m}$ and $W^j=W^j_+\oplus W^j_-$. 
One then checks that
\beqn
\pi_0^{-1}(x)\cong\{0\subset V_{2m}^\sigma\subset V_{4m}^\sigma\subset\cdots\subset V_{(2l-2)m}^\sigma\subset V\mid\sigma\in S_l\},
\eeqn
where $V_{2km}^\sigma=\oplus_{i=1}^kW^{\sigma(k)}$, $k=1,\ldots,l-1$.

Suppose that $m$ is odd. Consider the IC sheaf $\on{IC}(\widecheck\cO^{\Ll_1}_{(m_+m_-)^{ l}},\cL_{\psi_m})$. Here $\cL_{\psi_m}$ denotes the local system corresponding to the $\pi_1^{L^\theta}(\widecheck\cO^{\Ll_1}_{(m_+m_-)^{ l}})$-representation where $B_{W_1}^l$ acts trivially and $\bZ/ 2m\bZ$ acts via $\psi_m\in(\widehat{\bZ/2m\bZ})_m$. This is a character sheaf on $\Ll_1$ because of~\eqref{sl-nearby}. We claim that
\beq\label{claim1-type A}
\on{Ind}_{\Ll_1\subset\Lp_1}^{\Lg_1}\on{IC}(\widecheck\cO^{\Ll_1}_{(m_+m_-)^{ l}},\cL_{\psi_m})=\bigoplus_{\tau\in\cP(l)}\on{IC}(\widecheck\cO_{m^l_+m^l_-},\cT_{\tau,\psi_m})\oplus\cdots\text{ (up to shift)}.
\eeq

Let $m$ now be arbitrary and $0\leq k\leq l$. Consider the IC sheaf $\on{IC}(\widecheck\cO^{\Ll_1}_{(m_+m_-)^{ l}},\cL_{\rho_k\boxtimes\psi_{2m}})$. Here $\cL_{\rho_k\boxtimes\psi_{2m}}$ denotes the local system corresponding to the $\pi_1^{L^\theta}(\widecheck\cO^{\Ll_1}_{(m_+m_-)^{ l}})$-representation where $B_{W_1}^l$ acts via the character $\rho_k=\on{sgn}^k\boxtimes\on{triv}^{l-k}$ of $W_1^l$   and $\bZ/ 2m\bZ$ acts via $\psi_{2m}\in(\widehat{\bZ/2m\bZ})_{2m}$. This is a character sheaf on $\Ll_1$ because of~\eqref{sl-nearby}. We claim that
\beq\label{claim2-type A}
\on{Ind}_{\Ll_1\subset\Lp_1}^{\Lg_1}\on{IC}(\widecheck\cO^{\Ll_1}_{(m_+m_-)^{ l}},\oplus_{k=0}^l\cL_{\rho_k\boxtimes\psi_{2m}})=\bigoplus_{\rho\in\cP_2(l)}\on{IC}(\widecheck\cO_{m^l_+m^l_-},\cT_{\rho,\psi_{2m}})\oplus\cdots\text{ (up to shift)}.
\eeq

We consider next the sheaves in~\eqref{ind3}. Let $\cO_{m^l_+m^l_-\sqcup\mu}\in\underline{\cN_1^{\text{cs}}}$, $m$ odd and $\mu\neq\emptyset$. Let $P$ be the $\theta$-stable parabolic subgroup of $G$ which stabilizes the flag  $ 0\subset V_{n-2ml}\subset V
$,
 where
$
 V_{n-2ml}=\on{span}\{e_1,\ldots,e_{p-ml}\}\oplus \on{span}\{f_1,\ldots,f_{q-ml}\}.
$
 Let $L$ be the natural $\theta$-stable Levi subgroup of   $P$.  We have 
$
 L\cong  S(GL_{2ml}\times GL_{n-2ml})$ and $  
L^\theta\cong S(GL_{ml}\times GL_{ml}\times GL_{p-ml}\times GL_{q-ml}).$ Consider the stratum $\widecheck\cO_{m^l_+m^l_-}\times\cO_\mu$ in $\Ll_1$. We have
\beqn
\pi_1^{L^\theta}(\widecheck\cO_{m^l_+m^l_-}\times\cO_\mu)=B_{W_l}\times \bZ/\check d\bZ,\ \check d=\on{gcd}(2m,d_\mu).
\eeqn
Consider the IC sheaf $\on{IC}(\widecheck\cO_{m^l_+m^l_-}\times\cO_\mu,\cL_\tau\boxtimes\psi_m)$ on $\Ll_1$, where $\tau\in\cP(l)$ and $\psi_m\in(\widehat{\bZ/\check d\bZ})_m$. This is a character sheaf on $\Ll_1$ because of~\eqref{sl-nearby} and~\eqref{sl-nilp}. We claim that
\beq\label{claim3-type A}
\on{Ind}_{\Ll_1\subset\Lp_1}^{\Lg_1}\on{IC}(\widecheck\cO_{m^l_+m^l_-}\times\cO_\mu,\cL_\tau\boxtimes\psi_m)=\on{IC}(\widecheck\cO_{m^l_+m^l_-\sqcup\mu},\cT_{\tau,\psi_m})\oplus\cdots\text{ (up to shift)}.
\eeq
Using similar argument as above, we can see that the image of $K\times^{P_K}\left(\overline{\widecheck\cO_{m^l_+m^l_-}\times\cO_\mu}+(\Ln_P)_1\right)$ under the map $\pi:K\times^{P_K}\Lp_1\to K.\Lp_1$ is the closure of the stratum $\widecheck \cO_{m^l_+m^l_-\sqcup\mu}$.
 Let us  write the restriction of $\pi$ as
\beqn
\pi_0:K\times^{P_K}\left(\overline{\widecheck\cO_{m^l_+m^l_-}\times\cO_\mu}+(\Ln_P)_1\right)\to\bar{\widecheck \cO}_{m^l_+m^l_-\sqcup\mu}. 
\eeqn
Let $x\in{\widecheck \cO}_{m^l_+m^l_-\sqcup\mu}$. We show that $\pi_0^{-1}(x)$ consists of a single point.  Indeed, if $V_{n-2ml}\in\pi_0^{-1}(x)$, then $x|_{V_{n-2ml}}\in\cO_\mu$ and $x|_{V/V_{n-2ml}}\in\widecheck\cO_{m^l_+m^l_-}$. Thus $V_{n-2ml}$ equals the generalized eigenspace of $x$ with eigenvalue $0$. We conclude that $\pi_1^{P_K}(\left(\overline{\widecheck\cO_{m^l_+m^l_-}\times\cO_\mu}+(\Ln_P)_1\right)\cap{\widecheck \cO}_{m^l_+m^l_-\sqcup\mu})\cong\pi_1^K({\widecheck \cO}_{m^l_+m^l_-\sqcup\mu})$.

The claims~\eqref{claim1-type A},~\eqref{claim2-type A} and~\eqref{claim3-type A} follow from the same argument as in~\cite[\S7.3]{VX}.

\subsection{A bijection}To prove Theorem~\ref{char-sl}, it remains to establish a bijection between the two sides. We decompose both sides according to the central character and establish a bijection between the pieces.

Suppose that $m$ is odd. 
Let $\cO=\cO_\lambda$ be an orbit such that $m|d_\lambda$.  We have
\beqn
\lambda=(md_1)^{p_1}_+(md_1)^{q_1}_-\cdots(md_s)^{p_s}_+(md_s)^{q_s}_-.
\eeqn
 Let $l_i=\min\{p_i,q_i\}$ and let $l=\sum d_il_i$.  We attach to  $\cO_\lambda$ the partition $\tau=(d_1^{l_1}\cdots d_s^{l_s})$ and 
\beqn
\mu=(md_1)^{p_1-l_1}_+(md_1)^{q_1-l_1}_-\cdots(md_s)^{p_s-l_s}_+(md_s)^{q_s-l_s}_-.
\eeqn
 Note that $|\widehat {A_K(\cO_\lambda)}_{m}|=|(\widehat{\bZ/\check d_{m,\mu}\bZ})_m|=\varphi(m)$, where $\varphi$ is the Euler function of integers. Thus 
 \beqn
\fF:\left\{\on{IC}(\cO_\lambda,\cE_\psi)\,|\,\psi\in\widehat{A_K(\cO_\lambda)}_{m}\right\}\xrightarrow{\sim}
\left\{\on{IC}(\widecheck\cO_{m^l_+m^l_-\sqcup\mu},\cT_{\tau, \psi_{m}})\,|\, \psi_{m}\in(\widehat{\bZ/\check d\bZ})_m\right\}.
\eeqn
is a bijection.

Let $m$ now be arbitrary and let $\cO=\cO_\lambda$ be an orbit such that $2m|d_\lambda$. We have
\beqn
\lambda=(2md_1)^{p_1}_+(2md_1)^{q_1}_-\cdots(2md_s)^{p_s}_+(2md_s)^{q_s}_-.
\eeqn
Moreover $|\widehat {A_K(\cO_\lambda)}_{2m}|=|(\widehat{\bZ/2m\bZ})_{2m}|=\varphi(2m)$. We attach to $\lambda$  the bipartition $$\rho_\lambda=(d_1^{p_1}\cdots d_s^{p_s})(d_1^{q_1}\cdots d_s^{q_s})\in\cP_2(n/2m).$$ 
 Thus
\beqn
\fF:\left\{\on{IC}(\cO_\lambda,\cE_\phi)\,|\,\phi\in\widehat {A_K(\cO_\lambda)}_{2m}\right\}
\xrightarrow{\sim}\left\{\on{IC}(\widecheck\cO_{m^l_+m^l_-},\cT_{\rho_\lambda, \psi_{2m}})\,|\, \psi_{2m}\in(\widehat{\bZ/2m\bZ})_{2m}\right\}.
\eeqn
is a bijection.

This concludes the proof of Theorem~\ref{char-sl}. 

\subsection{Proof of Corollary~\ref{coro-cuspidal}}
Part (1) follows from~\eqref{claim3-type A} and Corollary~\ref{cusp-odd}. It remains to prove Part (2). It is clear that $\cA_K(\Lg_1)_n=\{\on{IC}(\cO^\omega_{reg},\cE_{\phi_n})\mid\omega={\rm I,II},\,\phi_{n}\in(\widehat{\bZ/n\bZ})_{n}\}$ and all sheaves in $\Char_K(\Lg_1)_n$ are cupsidal because $\Char_{L^\theta}(\Ll_1)_n=\emptyset$ for any $\theta$-stable Levi subgroup $L$ contained in a proper $\theta$-stable parabolic  subgroup. In view of~\eqref{claim1-type A},~\eqref{claim2-type A},~\eqref{claim3-type A} and Corollary~\ref{cusp-odd}, it remains to show that $\on{IC}(\widecheck\cO_{(\frac{n}{2})_+(\frac{n}{2})_-},\cT_{\tau,\psi_{\frac{n}{2}}}),\,\tau\in\cP(1),\psi_{\frac{n}{2}}\in(\widehat{\bZ/n\bZ})_{n/2}$ is cuspidal, when $n/2$ is odd. We have
$
\cA_K(\Lg_1)_n=\{\on{IC}(\cO^\omega_{reg},\cE_{\phi_{n/2}})\mid\omega={\rm I,II},\,\phi_{n/2}\in(\widehat{\bZ/n\bZ})_{n/2}\}\cup\{\on{IC}(\cO_{(n/2)_+(n/2)_-},\cE_{\phi})\mid\phi\in{(\widehat{\bZ/n/2\bZ})_{n/2}}\}.
$
 It follows from~\eqref{sl-nilp} that 
 $$
 \left\{\on{IC}(\widecheck\cO_{(\frac{n}{2})_+(\frac{n}{2})_-},\cT_{\tau,\psi_{\frac{n}{2}}})\,|\,\tau\in\cP(1),\psi_{\frac{n}{2}}\in(\widehat{\bZ/n\bZ})_{\frac{n}{2}}\right\}=\fF\{\on{IC}(\cO_{(\frac{n}{2})_+(\frac{n}{2})_-},\cE_{\phi})\mid\phi\in{(\widehat{\bZ/\frac{n}{2}\bZ})_{\frac{n}{2}}}\}.
 $$
Since the only $\theta$-stable Levi subgroups $L$ contained in proper $\theta$-stable parabolic subgroups with $\Char_{L^\theta}(\Ll_1)_{n/2}\neq\emptyset$ are of the form $S(GL_{n/2}\times GL_{n/2})$. In view of part (1), parabolic induction from $\Char_{L^\theta}(\Ll_1)_{n/2}$ only gives rise to nilpotent support character sheaves. We conclude that the sheaves in the above equation are indeed cuspidal. This proves part (2) of the corollary.

\appendix

\section{Microlocalization}
\label{C}

In this appendix we recall the notion of microlocalization and explain how it is used  in our context. Let us consider a $K$-equivariant perverse sheaf $\cF$ on $\cN_1$. We will consider the microlocalization $\mu_\cO(\cF)$ of $\cF$ along a nilpotent orbit $\cO$, see~\cite{KS}. The $\mu_\cO(\cF)$ lives on the conormal bundle   $T^*_\cO\Lg_1$ and is generically a local system. We will use notation from~\cite[\S 3.2]{VX}.

As it requires no extra work we consider a slightly more general situation. Let $b\in\Lg_1$ and we assume that $b$ is not nilpotent. Consider the $K$-orbit $X_b=K\cdot b$ and its closure $\bar X_b$. We choose a character $\chi$ of  $I_b=Z_K(b)/Z_K(b)^0$ which gives us a $K$-equivariant local system $\cL_\chi$ on $X_{b}$. Consider the IC-sheaf $\operatorname{IC}(X_{b}, \cL_\chi)$ on $\bar X_{b}$. Proceeding as in~\S\ref{central} let us write 
\beqn
\check f_b :\check\cZ_{b} = \overline{\{(x,c)\in\Lg_1\times \bC^* \mid x\in\bar X_{c\cdot b}\}} \to \bC\,.
\eeqn
The  $\operatorname{IC}(X_{b},\cL_\chi)$ can also be regarded as a sheaf on $\check\cZ_{b}-\check f_{b}^{-1}(0)$ which allows us to
form the nearby cycle sheaf $P_\chi=\psi_{\check f_{b}} \operatorname{IC}(X_{b}, \cL_\chi)$ on $\cN_1$.

We now give a Morse theoretic description of the local system $\mu_\cO(P_\chi)$.   Let us pick an $\fs\fl_2$-triple $\phi=(h,e,f)$ for the nilpotent orbit $\cO=K.e$ such that $h\in\Lg_0$. The Kostant-Rallis slice $e+\Lg_1^f$ gives us a normal slice at $e$ to the orbit $\cO$.
Consider a generic element $\xi=(e,a+n)$ in $\Lambda_\cO=T_{\cO}^*\Lg_1$  where $a\in (\La^\phi)^{rs}$, $n\in\cO$, $[a,n]=0$, and $[a+n,e]=0$. Note, in particular, that $a+n\in \widecheck \cO$.

We view the $a+n$ as a linear function on $\Lg_1$ via the Killing form. We write $\ell$ for its translate satisfying $\ell(e)=0$. Then we have: 
\beqn
\mu_\cO(P_\chi)_{(e,a+n)}=R^0\Gamma_{\{x\mid \on{Re}(\ell(x))\leq 0\}}(e+\Lg_1^f,P_\chi)_e= \oh^0(B_e\cap (e+\Lg_1^f),
B_e\cap (e+\Lg_1^f)\cap \ell^{-1}(\epsilon);P_\chi)\,;
\eeqn
here $B_e$ is a small ball around $e$ and $\epsilon>0$ is chosen small after the choice of $B_e$. 
As the $P_\chi$ is a nearby cycle sheaf we have:
\beqn
\mu_\cO(P_\chi)_{(e,a+n)}= \oh^0(B_e\cap (e+\Lg_1^f)\cap X_{c\cdot b},
B_e\cap (e+\Lg_1^f)\cap X_{c\cdot b}\cap \ell^{-1}(\epsilon); \operatorname{IC}(X_{c\cdot b}, \cL_\chi))\,
\eeqn
where $|c|$ is chosen small relative to the other choices. Just as in \cite{GVX}, based on ideas  in \cite{G1,G2,G3}, this latter expression allows us to calculate $\mu_\cO(P_\chi)_{(e,a+n)}$ and the resulting local system by Morse theory. Let us now assume that the characteristic variety of  $\operatorname{IC}(X_{c\cdot b}, \cL_\chi)$ is irreducible. This assumption implies that we can express  $\mu_\cO(P_\chi)_{(e,a+n)}$ in terms of the critical points on $X_{c\cdot b}$ 
Let us write $\{C_j\}_{j\in J}$  for the (non-degenerate) critical points of $\ell$ on $B_e\cap (e+\Lg_1^f)\cap X_{c\cdot b}$. Then we have the decomposition of $\mu_\cO(P_\chi)_{(e,a+n)}$ into local terms
\beqn
\mu_\cO(P_\chi)_{(e,a+n)} \ = \ \bigoplus_{j\in J} M_j \otimes (\cL_\chi)_{C_j}\,.
\eeqn
The $M_j$ are one dimensional vector spaces coming from the critical points $C_j$. The isomorphism in the above formula depends on a choice of (non-intersecting) paths from $\epsilon$ to the critical values $\ell(C_j)$. Letting the point $a+n$ vary gives us the description of the local system in terms of the paths, i.e., by Picard-Lefschetz theory. 

\begin{lem}
\label{cp}
The critical points of $a+n$ on $B\cap X_{b}\cap (e+\Lg_1^f)$ lie in  $X_{b}\cap (e+\Lg_1^f)\cap Z_{\Lg_1}(\La^\phi)$\,.
\end{lem}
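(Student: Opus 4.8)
The plan is to exploit the fact that a critical point of a linear function restricted to an orbit is detected by a Lie-algebra condition, together with the Kostant–Rallis slice structure. First I would set up the differential-geometric criterion: a point $x\in X_b\cap(e+\Lg_1^f)$ is critical for the function $y\mapsto\langle a+n,y\rangle$ (restricted to this intersection) precisely when $a+n$ annihilates the tangent space $T_x\big(X_b\cap(e+\Lg_1^f)\big)$. Since $X_b$ is a $K$-orbit, $T_xX_b=[\Lk,x]$, and the Kostant–Rallis slice $e+\Lg_1^f$ is transverse to the $K$-orbit through $e$; the relevant tangent space is $[\Lk,x]\cap\Lg_1^f$ (after translating so that the slice passes through $e$). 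Hence criticality says $\langle a+n,[\xi,x]\rangle=0$ for all $\xi\in\Lk$ with $[\xi,x]\in\Lg_1^f$, i.e.\ $\langle[a+n,x],\xi\rangle=0$ by invariance of the Killing form, equivalently $[a+n,x]\perp(\text{something})$.

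Next I would extract from this the statement that $[a+n,x]$ lies in a controlled subspace. Using that $a$ is regular semisimple in $\fa^\phi$ and $n$ is its commuting nilpotent partner, one shows $x$ must commute with $\fa^\phi$: the orthogonality condition above, combined with $\fsl_2$-theory for the triple $\phi=(h,e,f)$ (the slice $e+\Lg_1^f$ being built from $\Lg_1^f$, a lowest-weight space for $\ad h$), forces $[a+n,x]$ to have no component that can be cancelled, so $[a+n,x]=0$; then taking semisimple and nilpotent parts (as in the proof of the preceding lemma in Section~\ref{central}) separately gives $[a,x]=0$, hence $x\in Z_{\Lg_1}(\La^\phi)$. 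Together with $x\in X_b$ and $x\in e+\Lg_1^f$ this is exactly the assertion.

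The main obstacle I expect is the careful bookkeeping of tangent spaces at a point of the slice that is \emph{not} $e$ itself: one must show that the constraint "$[\xi,x]\in\Lg_1^f$" does not throw away so many directions $\xi$ that the critical-point equation becomes vacuous, and conversely that the equation, restricted to those $\xi$, really does pin down $[a+n,x]=0$. Concretely, this amounts to checking that the pairing between $[\Lg_1^f$-compatible tangent directions$]$ and $[a+n,x]$ is nondegenerate enough — which should follow from the $\ad h$-grading, since $\Lg_1^f\oplus[\Ln,x]$-type decompositions of $\Lg_1$ interact well with the Killing form. Once that linear-algebra point is in place, the passage to $Z_{\Lg_1}(\La^\phi)$ is the same semisimple/nilpotent-part argument already used in Section~\ref{central}, so I would simply cite it.
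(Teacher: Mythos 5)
Your approach is genuinely different from the paper's. You aim for the pointwise vanishing $[a+n,x]=0$ at a critical point $x$ and then pass to the Jordan decomposition to conclude $[a,x]=0$, hence $x\in Z_{\Lg_1}(\fa^\phi)$ because $a\in(\fa^\phi)^{rs}$ implies $Z_{\Lg_1}(a)=Z_{\Lg_1}(\fa^\phi)$. The target $[a+n,x]=0$ is in fact correct at the critical points (at $x=e+w\cdot(ca)$ one has $[a+e,x]=0$ since $\fa^\phi$ is abelian and commutes with $e$), and the final reduction is fine. But the step from the \emph{constrained} criticality condition --- namely $\langle a+n,[\xi,x]\rangle=0$ only for those $\xi\in\Lg_0$ with $[\xi,x]\in\Lg_1^f$ --- to the \emph{unconstrained} vanishing $[a+n,x]=0$ is exactly the obstacle you flag, and you do not close it. Invoking the $\ad h$-grading and nondegeneracy of the Killing form is a heuristic; showing that the relevant pairing is nondegenerate is essentially the content of the lemma, so as written the proposal has a genuine gap at its central step.

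The paper sidesteps this linear algebra entirely with a torus-action argument borrowed from Grinberg~\cite{G3}. It first works with the adjoint $G$-action: the torus $T^\phi$ acts on $G\cdot b\cap(e+\Lg^f)$ preserving both the slice and the function $a+n$, and since the critical set of $a+n$ must be discrete near $e$, any non-$T^\phi$-fixed critical point would sweep out a positive-dimensional critical locus, a contradiction. Hence critical points are $T^\phi$-fixed, so they lie in $Z_\Lg(\fa^\phi)$. The symmetric-pair case then follows because a critical point of $a+n$ on $X_b\cap(e+\Lg_1^f)$ is automatically one on $G\cdot b\cap(e+\Lg^f)$. This is cleaner and avoids having to analyze which tangent directions $\xi$ satisfy $[\xi,x]\in\Lg_1^f$. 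If you want to pursue your route, the missing nondegeneracy would most naturally be extracted from this same equivariance, so you would likely end up reproducing the torus-fixed-point reasoning in disguise.
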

\begin{proof}
We first prove this statement in the case of the adjoint actions of $G$ on $\Lg$ using an idea from~\cite{G3}. We also use the notation from~\cite[Appendix A]{VX}. We consider 
\beqn
G\cdot{b}\cap (e+\Lg^f).
\eeqn
The torus $T^\phi$ acts on this normal slice preserving it. Furthermore, the $T^\phi$ preserves the function $a+n$. Thus, if a critical point is not fixed by $T^\phi$ then we get a non-discrete critical set, but  the critical set has to be discrete near $e$. This gives us our conclusion. 

To prove the claim in the symmetric space case we observe that any critical point of $a+n$ on $X_{b}\cap (e+\Lg_1^f)$ is also a critical point of $a+n$ on $G\cdot{b}\cap (e+\Lg^f)$ as an easy calculation shows. Thus we get our conclusion. Note also that the converse is obvious, i.e., if a point on $X_{b}\cap (e+\Lg_1^f)$ is a critical point of $a+n$ on $G\cdot {b}\cap (e+\Lg^f)$ then it is of course also a critical point of $a+n$ on $X_{b}\cap (e+\Lg_1^f)$. 
\end{proof}


\begin{thebibliography}{999999}






\bibitem[BBD]{BBD}A. A. Belinson, J. Bernstein\ and\ P. Deligne, Faisceaux pervers, in {\it Analysis and topology on singular spaces, I (Luminy, 1981)}, 5--171, Ast\'erisque, 100, Soc. Math. France, Paris. 




\bibitem[CVX]{CVX}T.H. Chen, K. Vilonen and T. Xue, Springer correspondence for the split symmetric pair in
type A. Compos. Math. {\bf 154} (2018), 2403--2425.

\bibitem[CM]{CM} D. H. Collingwood\ and\ W. M. McGovern, {\em Nilpotent orbits in semisimple Lie algebras}, Van Nostrand Reinhold Mathematics Series, Van Nostrand Reinhold Co., New York(1993). 

\bibitem [GM]{GM} M. Goresky and R. MacPherson. {\em Stratified Morse theory.} Springer, 1988.

\bibitem[G1]{G1}M. Grinberg, On the specialization to the asymptotic cone, J. Algebraic Geom. {\bf 10} (2001), no.~1, 1--17.


\bibitem[G2]{G2}M. Grinberg, A generalization of Springer theory using nearby cycles, Represent. Theory {\bf 2} (1998), 410--431 
(electronic).

\bibitem[G3]{G3}M. Grinberg, Morse groups in symmetric spaces corresponding to the symmetric group, Sel. math., New ser. {\bf 5} (1999), 303--323.


\bibitem[GVX]{GVX}M. Grinberg, K. Vilonen and T. Xue, Nearby cycle sheaves for symmetric pairs, arxiv 1805.02794. 

\bibitem[H]{H}A. Henderson, 
Fourier transform, parabolic induction, and nilpotent orbits. 
Transform. Groups {\bf 6} (2001), no. 4, 353--370. 



\bibitem[KS]{KS}M. Kashiwara; P. Schapira. {\em Sheaves on manifolds}. Grundlehren der Mathematischen Wissenschaften, 292. Springer-Verlag, Berlin, 1990. 




\bibitem[L1]{Lu}G. Lusztig, Intersection cohomology complexes on a reductive group. Invent. Math. {\bf 75} (1984), no. 2, 205--272. 

\bibitem[L2]{L}G. Lusztig,  Study of antiorbital complexes. Representation theory and mathematical physics, 259--287, Contemp. Math., 557, Amer. Math. Soc., Providence, RI, 2011. 




\bibitem[P]{P}V. L. Popov, Self-dual algebraic varieties and nilpotent orbits, in {\it Algebra, arithmetic and geometry, Part I, II (Mumbai, 2000)}, 509--533, Tata Inst. Fund. Res. Stud. Math., 16, Tata Inst. Fund. Res., Bombay.



\bibitem[SS]{SS}T.A. Springer and R. Steinberg, Conjugacy classes. 1970 {\em Seminar on Algebraic Groups and Related Finite Groups}  pp. 167--266. Lecture Notes in Mathematics, Vol. 131.





\bibitem[VX1]{VX}K. Vilonen and T. Xue. Character sheaves for classical symmetric pairs. With an appendix by Dennis Stanton. arXiv:1806.02506.

\bibitem[VX2]{VXsg} K. Vilonen and T. Xue,   Character sheaves for graded Lie algebras: stable gradings, arXiv:2012.08111.

\bibitem[X]{X}T. Xue. Character sheaves for classical symmetric pairs: Spin groups. Preprint.




\end{thebibliography}
\end{document}